\journal{}
\theoremstyle{plain}
  \newtheorem{thm}{Theorem}[section]
  \newtheorem{lem}[thm]{Lemma}
  \newtheorem{prop}[thm]{Proposition}
  \newtheorem{cor}[thm]{Corollary}
\theoremstyle{definition}
  \newtheorem{exmp}[thm]{Example}
  \newtheorem{rem}[thm]{Remark}
\DeclareMathOperator{\ob}{ob}
\def\oto{{\bfig\morphism<180,0>[\mkern-4mu`\mkern-4mu;]\place(86,0)[\circ]\efig}}
\def\rto{{\bfig\morphism<180,0>[\mkern-4mu`\mkern-4mu;]\place(82,0)[\mapstochar]\efig}}
\newcommand{\lra}{\longrightarrow}
\newcommand{\lda}{\swarrow}
\newcommand{\rda}{\searrow}
\newcommand{\Lra}{\Longrightarrow}
\newcommand{\bv}{\bigvee}
\newcommand{\bw}{\bigwedge}
\newcommand{\dv}{\dashv}
\newcommand{\od}{\odot}
\newcommand{\opl}{\oplus}
\newcommand{\ola}{\overleftarrow}
\newcommand{\ora}{\overrightarrow}
\renewcommand{\phi}{\varphi}
\newcommand{\ep}{\varepsilon}
\newcommand{\lam}{\lambda}
\newcommand{\si}{\sigma}
\newcommand{\CA}{\mathcal{A}}
\newcommand{\CB}{\mathcal{B}}
\newcommand{\CD}{\mathcal{D}}
\newcommand{\CH}{\mathcal{H}}
\newcommand{\CP}{\mathcal{P}}
\newcommand{\CQ}{\mathcal{Q}}
\newcommand{\CS}{\mathcal{S}}
\newcommand{\CT}{\mathcal{T}}
\newcommand{\CU}{\mathcal{U}}
\newcommand{\CV}{\mathcal{V}}
\newcommand{\sP}{{\sf P}}
\newcommand{\se}{{\sf e}}
\newcommand{\sm}{{\sf m}}
\newcommand{\sfs}{{\sf s}}
\newcommand{\sy}{{\sf y}}
\newcommand{\sh}{\mathsf{h}}
\newcommand{\bbI}{\mathbb{I}}
\newcommand{\bbS}{\mathbb{S}}
\newcommand{\bbT}{\mathbb{T}}
\newcommand{\Fs}{\mathfrak{s}}
\newcommand{\Fx}{\mathfrak{x}}
\newcommand{\Fy}{\mathfrak{y}}
\newcommand{\Fz}{\mathfrak{z}}
\newcommand{\Cat}{{\bf Cat}}
\newcommand{\Dist}{{\bf Dist}}
\newcommand{\Rel}{{\bf Rel}}
\newcommand{\Set}{{\bf Set}}
\newcommand{\Sup}{{\bf Sup}}
\newcommand{\QCat}{\CQ\text{-}\Cat}
\newcommand{\QDist}{\CQ\text{-}\Dist}
\newcommand{\QRel}{\CQ\text{-}\Rel}
\newcommand{\VRel}{\CV\text{-}\Rel}
\newcommand{\CPd}{\CP^{\dag}}
\newcommand{\syd}{\sy^{\dag}}
\newcommand{\CHd}{{\CH^\dagger}}
\newcommand{\co}{{\rm co}}
\newcommand{\op}{{\rm op}}
\newcommand{\hT}{\hat{T}}
\newcommand{\hCT}{\hat{\CT}}
\newcommand{\ophi}{\ora{\phi}}
\newcommand{\olphi}{\ola{\phi}}
\newcommand{\ssd}{\sfs^{\dag}}
\newcommand{\Fsd}{\Fs^{\dag}}
\newcommand{\Fyd}{\Fy^{\dag}}
\renewcommand{\leq}{\leqslant}
\renewcommand{\geq}{\geqslant}
\newcommand{\QSet}{\Set/\CQ_0}
\newcommand{\ovl}{\overline}
\numberwithin{equation}{section}
\begin{document}

\begin{frontmatter}

\title{Monads on $\mathcal{Q}\text{-}\mathbf{Cat}$ and their lax extensions to $\QDist$}

\author[S]{Hongliang Lai}
\ead{hllai@scu.edu.cn}

\author[Y]{Walter Tholen\corref{cor}}
\ead{tholen@mathstat.yorku.ca}

\cortext[cor]{Corresponding author.}
\address[S]{School of Mathematics, Sichuan University, Chengdu 610064, China}
\address[Y]{Department of Mathematics and Statistics, York University, Toronto, Ontario M3J 1P3, Canada}

\begin{abstract} 
For a small quantaloid $\CQ$, we consider 2-monads on the 2-category $\QCat$ and their lax extensions to the 2-category
$\QDist$ of small $\CQ$-categories and their distributors, in particular those lax extensions that are flat, in the sense that they map identity distributors to identity distributors. In fact, unlike in the discrete case, a 2-monad on $\QCat$ may admit only one flat lax extension.
Every ordinary monad on the comma category $\Set/{\rm ob}\CQ$ with a lax extension to $\CQ\text{-}\bf{Rel}$ gives rise to such a 2-monad on $\QCat$, and we describe this process globally as a coreflective embedding. The $\CQ$-presheaf and the double $\CQ$-presheaf monads are important examples of 2-monads  on $\QCat$ allowing flat lax extensions to $\QDist$, and so are their submonads, obtained by the restriction to conical (co)presheaves and known as the  $\CQ$-Hausdorff and double $\CQ$-Hausdorff monads, which we define here in full generality, thus generalizing some previous work in the case when $\CQ$ is a quantale, or just the ``metric"  quantale $[0,\infty]$. Their discretization leads naturally to various lax extensions of the relevant $\Set$-monads used in monoidal topology.

\end{abstract}

\begin{keyword}
quantaloid \sep $\CQ$-category \sep $\CQ$-distributor \sep 2-monad\sep lax extension \sep flat lax extension \sep presheaf monad \sep double presheaf monad \sep Hausdorff monad

MSC[2010]: 18D20 \sep 18C15 \sep 18A40 \sep 06F99
\end{keyword}

\end{frontmatter}

\section{Introduction}
In order to extend Manes' \cite{Manes1969} description of compact Hausfdorff spaces via ultrafilter convergence to all topological spaces, Barr \cite{Barr1970} employed the prototype of a lax extension of a monad on $\Set$. Observing that a relation $R\subseteq X\times Y$, when written as a morphism $r:X\rto Y$ in the category $\Rel$ of sets and relations, may be factored as
\[r=q_{\circ}\circ p^{\circ},\]
where $p^{\circ}:X\rto R$ is the converse of the graph of the first projection $p:R\to X$ and $q_{\circ}:R\rto Y$ the graph of the second projection $q:R\to Y$,
he extended the ultrafilter functor $\beta:\Set\to\Set$ to $\Rel$ by putting
\[\ovl{\beta}r=(\beta q)_{\circ}\circ(\beta p)^{\circ}:(\beta X\rto \beta R\rto \beta Y)\quad\quad\quad(*)\]
or, in pointwise terms,
\[\Fx(\ovl{\beta}r)\Fy\iff\exists\Fz\in \beta R\,(p[\Fz]=\Fx\,\,\text{and}\,\,q[\Fz]=\Fy),\]
for all ultrafilters $\Fx$ on $X$ and $\Fy$ on $Y$; here $p[\Fz]=\beta p(\Fz)$ denotes the image of the ultrafilter $\Fz$ under the map $p$.
While Barr's extension procedure works well for $\beta$, its use beyond its original purpose seems to be rather limited. For example, when one trades $\beta$ for an arbitrary $\Set$-functor $T$, the procedure yields a lax extension of $T$ to $\Rel$ (if and) only if $T$ preserves weak pullback diagrams \cite{Hofmann2014}. Generally speaking, there is a price to pay for the reliance on the Cartesian structure of $\Set$ when presenting the relation $r:X\rto Y$ as a subobject of $X\times Y$.
 
 However, there is another relational factorization which, as we will show in this paper, while giving an alternative description of the Barr extension when applied to $\beta$, lends itself to a vast generalization to other endofunctors or monads, reaching considerably beyond the $\Set$-environment. It takes advantage of the fact that the $\Set$-functor $\beta$ is just the ``discrete restriction" of the functor 
$\beta:{\bf Ord}\to{\bf Ord}$ which, for a (pre)ordered set $(X,\leq)$, orders the set of ultrafilters on the set $X$ by
\[\Fx\leq\Fy\iff \forall A\in\Fx\,\forall B\in\Fy\,\exists x\in A\,\exists y\in B\, (x\leq y).\]
With this fact in mind, one factors the relation $r:X\rto Y$ of (discretely ordered) sets through the powerset $PX$, ordered by inclusion, as
\[r=(\ola r)^{\star}\circ (\sy_X)_{\star},\]
with the maps $\sy_X:X\to PX, x\mapsto\{x\}$, and $\ola{r}:Y\to PX,y\mapsto\{x\in X\,|\,x\,r\,y\}$; here, for any 
monotone map $f:(A,\leq)\to(B,\leq)$, we denote by $f_{\star}$ the order graph $\{(fx,y)\,|\,x\in A, y\in B, fx\leq y\}$ of $f$, while $f^{\star}=\{(y,fx)\,|\,x\in A, y\in B, y\leq fx\}$. Now the lax extension $\ovl{\beta}$ of the $\Set$-functor $\beta$ may be described by
\[\ovl{\beta}r= (\beta\ola{r})^{\star}\circ(\beta\sy_X)_{\star}:(\beta X\rto \beta PX\rto \beta Y)\quad\quad\quad(**)\]
or, in pointwise terms,
\[\Fx(\ovl{\beta}r)\Fy\iff\sy_X[\Fx]\leq\ola{r}[\Fy]\iff\forall A\in\Fx\,\forall B\in\Fy\,\exists x\in A\,\exists y\in B\,(x\,r\,y),\]
for all $\Fx\in \beta X,\Fy\in \beta Y$.

The extension procedure $(**)$ was introduced in \cite{Akhvlediani2010} more generally for any endofunctor $T$ of $\CV$-$\Cat$ in lieu of $\beta$, to yield a lax extension of $T$ to the $2$-category $\CV$-$\bf{Dist}$ of small $\CV$-categories and their distributors (=(bi-)modules=profunctors), for any commutative quantale $\CV$. When restricted to $\CV$-relations (= ``discrete" $\CV$-distributors), and for $\CV=2$ the two-element chain, so that $\CV$-$\Cat=\bf{Ord}$, it corresponds to $(**)$. Its superiority over $(*)$ in terms of applicability is founded in the use of the presheaf category of a $\CV$-category $X$ which, when $\CV=2$, reduces to the powerset $PX$ in the discrete case, rather than the use of the Cartesian product in $\Set$.

The question remains how one may guarantee a sufficient supply of endofunctors of $\CV$-$\Cat$ that can cover the relevant applications. One answer to this question was given in \cite{Tholen2009} where it was observed that any monad on $\Set$ equipped with a lax extension to $\CV$-$\Rel$ (the discrete ``skeleton" of $\CV$-$\Dist$) gives rise to a monad on $\CV$-$\Cat$ in a natural way (see also \cite{Hofmann2014}). Another answer was presented in \cite{Lai2016}, in the considerably generalized environment in which the quantale $\CV$ is traded for a small quantaloid $\CQ$ (without any commutativity constraints). There we presented lax extensions to $\QDist$ of the presheaf monad and the copresheaf monad on $\QCat$, as well as their two composite monads. Neither of these four monads may be obtained from a ``discrete" counterpart.

In this paper we merge the aspects of \cite{Tholen2009, Akhvlediani2010, Lai2016} alluded to earlier and present a more comprehensive theory of $2$-monads on $\QCat$ and their lax extensions to $\QDist$, for any small quantaloid $\CQ$. A simple but pivotal observation that was missed in previous works is that a $2$-functor $\CT$ on $\QCat$ may admit only one lax extension $\hCT$ that is {\em flat}, so that $\hCT$ maps identities to identities in $\QDist$. 
Such lax extension must necessarily be minimal, and there is an easy criterion for the extension procedure $(**)$ applied to $\CT$ (in lieu of $\beta$) to produce the minimal flat lax extension of $\CT$: the $2$-functor $\CT$ must preserve the {\em full fidelity of the Yoneda $\CQ$-functor} of every 
$\CQ$-category into its presheaf category.  The $2$-monads on $\QCat$ whose endofunctor satisfies this criterion form the full subcategory $\sf{ffYMnd}(\QCat)$ of $\sf{Mnd}(\QCat)$.

By contrast, in the discrete environment, a $\Set/\CQ_0$-monad (with $\CQ_0={\rm ob}\CQ$) may allow several flat lax extensions to $\CQ$-$\bf{Rel}$. Nevertheless, after the preliminaries of Section 2 we start the essence of this paper in Section 3 in the discrete setting, by showing how every monad on $\QSet$ equipped with a lax extension to $\QRel$ gives rise to a $2$-monad on $\QCat$ which comes equipped with a lax extension to $\QDist$. The lax extensions obtained turn out to be flat, so that this process describes a full embedding of the meta-category 
$\mathsf{ExtMnd}(\QSet)$ of laxly extended $\QSet$-monads into $\sf{ffYMnd}(\QCat)$. One of the two main results of this paper says that this embedding is coreflective, with the coreflector given by ``discretization" (Theorem \ref{coref}). 

The other main insight is given in Section 5 where we show that the principal lax monad extensions discussed in \cite{Hofmann2014} all arise from an application of the aforementioned coreflector to one of the four monads on $\QCat$ discussed in \cite{Lai2016}, given by the presheaf and copresheaf monads on $\QCat$ and their two composite monads, as well as by their restrictions to {\em conical} (co)presheaves. As observed by \cite{Stubbe2010} for the presheaf monad, such restriction yields the {\em Hausdorff monad}, as introduced in \cite{Akhvlediani2010} in order to provide a categorical environment for the study of Hausdorff and Gromov distances. The restriction of the double (co)presheaf monads to conical (co)presheaves is, however, not trivial and requires us to impose the condition that the ambient quantaloid $\CQ$ be completely distributive, so that all hom sets of $\CQ$ are completely distributive lattices: see Theorem \ref{doubleHaus}.

\section{Quantaloid-enriched categories and their distributors}

A \emph{quantaloid} \cite{Rosenthal1996} is a category enriched in the monoidal-closed category $\Sup$ \cite{Joyal1984} of complete lattices and $\sup$-preserving maps. Explicitly, a quantaloid $\CQ$ is a 2-category with the 2-cells given by the order of morphisms, denoted by $\leq$, such that each hom-set $\CQ(p,q)$ is a complete lattice and the composition of morphisms from either side preserves arbitrary suprema. Hence, $\CQ$ has ``internal homs'', denoted by $\lda$ and $\rda\,$, as the right adjoints of the composition maps
$$(-)\circ \alpha\dv (-)\lda \alpha:\CQ(p,r)\to\CQ(q,r)\quad\text{and}\quad \beta\circ (-)\dv \beta\rda (-):\CQ(p,r)\to\CQ(p,q);$$
that is, for all morphisms $\alpha:p\to q$, $\beta:q\to r$, $\gamma:p\to r$ in $\CQ$, one has the equivalences
$$\alpha\leq \beta\rda \gamma\iff \beta\circ\alpha\leq\gamma\iff \beta\leq \gamma\lda \alpha.$$

Throughout this paper, we let $\CQ$ be a \emph{small} quantaloid. From $\CQ$ one forms a new (large) quantaloid $\QRel$ of \emph{$\CQ$-relations}, with the following data: its objects are those of $\Set/\CQ_0$, with $\CQ_0:=\ob\CQ$, {\em i.e.}, sets $X$ equipped with an \emph{array} (or \emph{type}) map $|\text{-}|:X\to\CQ_0$, and a morphism $r:X\rto Y$ in $\QRel$ is given by a map that assigns to each pair $x\in X$, $y\in Y$ a morphism $r(x,y):|x|\to|y|$ in $\CQ$; its composite with $s:Y\rto Z$ is defined by
$$(s\circ r)(x,z)=\bv_{y\in Y}s(y,z)\circ r(x,y),$$
and $1^\circ_X:X\rto X$ with
$$1^\circ_X(x,y)=\begin{cases}
1_{|x|} & \text{if}\ x=y,\\
\bot & \text{else,}
\end{cases}$$
acts as the identity morphism on $X$. As $\CQ$-relations are equipped with the pointwise order inherited from $\CQ$, internal homs in $\QRel$ are computed pointwise by
$$(t\lda r)(y,z)=\bw_{x\in X}t(x,z)\lda r(x,y)\quad\text{and}\quad(s\rda t)(x,y)=\bw_{z\in Z}s(y,z)\rda (x,z)t,$$
for all $r:X\rto Y$, $si:Y\rto Z$, $t:X\rto Z$.

A map $f:X\to Y$ in $\QSet$ may be seen as a $\CQ$-relation via its graph $f_\circ$ or its cograph $f^\circ$,
which are given by
\begin{equation} \label{graph_in_Rel}
\begin{array}{ll}
f_\circ:X\rto Y,& f_\circ(x,y)=1^\circ_Y(fx,y),\\
f^\circ:Y\rto X,& f^\circ(y,x)=1^\circ_Y(y,fx).
\end{array}
\end{equation}

A (small) \emph{$\CQ$-category} is precisely an (internal) monad in the 2-category $\QRel$; or, equivalently, a monoid in the monoidal-closed category $(\QRel(X,X),\circ)$
for some $X$ over $\CQ_0$. Explicitly, a $\CQ$-category consists of an object $X$ in $\Set/\CQ_0$ and a $\CQ$-relation $a:X\rto X$ such that $1^\circ_X\leq a$ and $a\circ a\leq a$; that is:
\[1_{|x|}\leq a(x,x)\quad\text{and}\quad a(y,z)\circ a(x,y)\leq a(x,z)\]
for all $x,y,z\in X$. For every $\CQ$-category $(X,a)$, the underlying (pre)order on $X$ is given by
$$x\leq x'\iff|x|=|x'|\ \text{and}\ 1_{|x|}\leq a(x,x').$$

A map $f:(X,a)\to(Y,b)$ between $\CQ$-categories is a \emph{$\CQ$-functor} if it lives in $\Set/\CQ_0$ and satisfies (any of) the following four equivalent conditions:
\begin{enumerate}[(i)]
\item $f_\circ\circ a\leq b\circ f_\circ$;
\item $a\circ f^{\circ}\leq f^{\circ}\circ b$;
\item $a\leq f^\circ\circ b\circ f_\circ$;
\item $\forall x,x'\in X:\,a(x,x')\leq b(fx,fx')$.
 \end{enumerate}
 With the pointwise order of $\CQ$-functors inherited from $Y$, i.e.,
\begin{align*}f\leq g:(X,a)\to(Y,b)&\iff\forall x\in X:\ fx\leq gx\\
&\iff\forall x\in X:\ 1_{|x|}\leq b(fx,gx),
\end{align*}
$\CQ$-categories and $\CQ$-functors are organized into a 2-category, denoted by $\QCat$.

A $\CQ$-relation $\phi:X\rto Y$ becomes a \emph{$\CQ$-distributor} $\phi:(X,a)\oto(Y,b)$ if it is compatible with the $\CQ$-categorical structures $a$ and $b$, in the sense that
$$b\circ\phi\circ a\leq\phi,$$
in which case one actually has $b\circ\phi\circ a=\phi.$ With the composition and internal homs calculated in the same way as for $\CQ$-relations,
$\CQ$-categories and $\CQ$-distributors constitute a quantaloid, denoted by $\QDist$; the identity $\CQ$-distributor on $(X,a)$ is given by the hom $a:(X,a)\oto(X,a)$.

The set-of-objects functor $$o:\QCat\to\Set/\CQ_0,\quad (X,a)\mapsto X,$$ has a left adjoint $$d:\Set/\CQ_0\to\QCat,\quad X\mapsto(X,1^\circ_X),$$ which embeds $\Set/\CQ_0$ into $\QCat$ as a full coreflective subcategory. Furthermore, with no change on their effect on objects, the functors $o$ and $d$ may be extended to $\QRel$ and $\QDist$ in an obvious way, thus yielding respectively a lax $2$-functor (not a functor)  $$\bf o:\QDist\to\QRel$$ and a $2$-embedding 
$$\bf d:\QRel\to\QDist.$$ 

Every $\CQ$-functor $f:(X,a)\to(Y,b)$ induces an adjunction
$$f_*\dv f^*:(Y,b)\oto(X,a)$$ in the $2$-category $\QDist$ with
\begin{equation} \label{graph_def}
f_*=b\circ f_\circ,\quad f^*=f^\circ\circ b.
\end{equation}
Hence, there are $2$-functors $(-)_*:\QCat^{\rm co}\to\QDist$ and $(-)^*:\QCat^{\rm op}\to\QDist$ which map objects identically and make
the diagrams
\[\begin{array}{cc}\bfig \square<800,500>[\QSet`\QRel`\QCat^{\rm co}`\QDist;(-)_\circ`d`\bf d`(-)_*]\efig & \bfig\square<800,500>[(\QSet)^{\rm op}`\QRel`\QCat^{\rm op}`\QDist;(-)^\circ`d^{\rm{op}}`\bf d`(-)^*]\efig
 \end{array}\]
commute (where ``$\co$'' refers to the dualization of 2-cells).
Note that $$a=(1_X)_*=1_X^*,$$ for every $\CQ$-category $X=(X,a)$. A $\CQ$-functor $f:X\to Y$ is {\em fully faithful} if $f^*\circ f_*=1^*_X$; that is, if $a(x,y)=b(fx,fy)$ for all $x,y\in X$.

With $\{s\}$ denoting the singleton $\CQ$-category with only one object $s\in\CQ_0$ of array $s$ and hom $1_{\{s\}}^*(s,s)=1_s$, $\CQ$-distributors of the form $\si:X\oto\{s\}$, called \emph{presheaves} on $X$; with $|\si|=s$ and
$1_{\CP X}^*(\si,\tau)=\tau\lda\si$ they constitute the $\CQ$-category $\CP X$. Dually, the \emph{copresheaf} $\CQ$-category $\CP^\dagger X$ consists of $\CQ$-distributors $\si:\{s\}\oto X$ with $1_{\CP^\dagger X}^*(\si,\tau)=\tau\rda\si$.




Every $\CQ$-distributor $\phi:X\oto Y$ induces $\CQ$-functors  \cite{Shen2013a}  given by
\begin{equation}\phi^\odot:\CP Y\to\CP X,\,\phi^\odot\tau=\tau\circ\phi, \text{ and }
\phi^\oplus:\CPd X\to\CPd Y,\,\phi^\oplus\si=\phi\circ\si.
\end{equation}
One obtains two pairs of adjoint 2-functors \cite{Heymans2010}, described by
\begin{equation} \label{QCat_QDist_adjunction}
\bfig
\morphism(-150,100)<400,0>[X`Y;\phi] \place(40,100)[\circ]
\morphism(-150,-100)<400,0>[Y`\CP X;\olphi]
\morphism(-300,20)/-/<700,0>[`;]
\place(820,20)[(\olphi y)_x=\phi(x,y)]
\morphism(1600,0)/@{->}@<7pt>/<800,0>[\QCat`(\QDist)^{\op},;(-)^*]
\morphism(2400,0)|b|/@{->}@<5pt>/<-800,0>[(\QDist)^{\op},`\QCat;\CP]
\place(1950,0)[\bot]
\place(1500,-250)[(\phi^{\od}:\CP Y\to\CP X)]
\morphism(2050,-250)/|->/<-150,0>[`;]
\place(2400,-250)[(\phi:X\oto Y)]
\morphism(-150,-500)<400,0>[X`Y;\phi] \place(40,-500)[\circ]
\morphism(-150,-700)<400,0>[X`\CPd Y;\ophi]
\morphism(-300,-580)/-/<700,0>[`;]
\place(820,-580)[(\ophi x)_y=\phi(x,y)]
\morphism(1600,-600)/@{->}@<7pt>/<800,0>[\QCat`(\QDist)^{\co},;(-)_*]
\morphism(2400,-600)|b|/@{->}@<5pt>/<-800,0>[(\QDist)^{\co},`\QCat;\CPd]
\place(1950,-600)[\bot]
\place(1450,-850)[(\phi^{\opl}:\CPd X\to\CPd Y)]
\morphism(2050,-850)/|->/<-150,0>[`;]
\place(2400,-850)[(\phi:X\oto Y)]
\efig
\end{equation}
The endofunctor of the \emph{presheaf 2-monad} $\frak{P}=(\CP,\sfs,\sy)$ on $\QCat$ induced by $(-)^*\dv\CP$ sends each $\CQ$-functor $f:X\to Y$ to
$$f_!:=(f^*)^{\od}:\CP X\to\CP Y;$$
the unit $\sy$ is given by the Yoneda functor
\begin{equation}
\sy_X=\overleftarrow{1^*_X}:X\to\CP X,
\end{equation}
and the monad multiplication $\sfs$ at $X$ is obtained from the counit $(\sy_X)_*$ as
\begin{equation} \label{s_def}
\sfs_X=(\sy_X)_*^{\od}:\CP\CP X\to\CP X.
\end{equation}
Similarly, the endofunctor of the induced \emph{copresheaf 2-monad} $\frak{P}^{\dag}=(\CP^\dagger,\ssd,\syd)$ on $\QCat$ sends $f$ to
$$(f_*)^{\opl}:\CP^\dagger X\to\CP^\dagger Y,$$
with the unit $\sy^\dagger$ given by the co-Yoneda functor
 \begin{equation}
 \sy^\dagger_X=\overrightarrow{1^*_X}:X\to\CP^\dagger Y,
 \end{equation}
 and the monad multiplication $\ssd$ by
\begin{equation} \label{sd_def}
\ssd_X=((\syd_X)^*)^{\oplus}:\CP^\dagger\CP^\dagger X\to\CP^\dagger X.
\end{equation}


\section{From laxly extended monads on $\QSet$ to laxly extended monads on $\QCat$}

First we recall the ``discrete version" of lax extensions of monads (see \cite{Tholen2016}), by considering a mere functor
$T:\QSet\to\QSet$. A {\em lax extension of $T$ to} $\QRel$ is a lax $2$-functor  $\hT:\QRel\to\QRel$ which agrees with $T$ on objects and satisfies the lax extension conditions
\begin{equation}\label{set_ext_con}(Tf)_\circ\leq\hT(f_\circ),\quad (Tf)^\circ\leq\hT(f^\circ)\end{equation}
for all maps $f:X\to Y$ in $\QSet$. The lax extension is {\em flat} if these inequalities are in fact equalities.

The lax 2-functor $\hT$ is a {\em lax extension of the $\QSet$-monad $\bbT=(T,m,e)$ to} $\QRel$ if $\hT$ is a lax extension of $T$ which makes both $m_{\circ}:\hT\hT\to\hT$ and $e_{\circ}:1\to\hT$ oplax, {\em i.e.}, which satisfies
\begin{equation}(m_Y)_\circ\circ\hT\hT r\leq\hT r\circ(m_X)_\circ,\quad
(e_Y)_\circ\circ r\leq \hT r\circ(e_X)_\circ\end{equation}  
or, equivalently,
\begin{equation}\label{mon_ext_Rel}\hT\hT r\leq(m_Y)^\circ\circ\hT r\circ(m_X)_\circ,\quad
r\leq(e_Y)^\circ\circ\hT r\circ(e_X)_\circ,\end{equation}
for all $\CQ$-relations $r:X\rto Y$.

\begin{prop}\label{discrete whisker}
{\rm (\cite{Tholen2016, Hofmann2014})} 
Given $T:\QSet\to\QSet$,
a lax 2-functor $\hT:\QRel\to\QRel$ which coincides with $T$ on objects is a lax extension of $T$ if, and only if,
\begin{equation*}
\hT(f^\circ\circ r)=(T f)^\circ\circ\hT r\text{ and }\hT(s\circ f_\circ)=\hT s\circ(Tf)_\circ.
\end{equation*}
 for all maps $f:X\to Y$ over $\CQ_0$ and all $\CQ$-relations $r:Z\rto Y$, $s:Y\rto Z$. In this case, $\hT$ is flat if, and only if, for all sets $X$ over $\CQ_0$, \[\hT1_X^{\circ}=1_{TX}^{\circ}.\]
\end{prop}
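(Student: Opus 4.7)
The plan is to prove the equivalence by playing the adjunction $f_\circ\dashv f^\circ$ in $\QRel$ (and its image $(Tf)_\circ\dashv(Tf)^\circ$) against the laxness of $\hT$ and the extension inequalities $(Tf)_\circ\leq\hT(f_\circ)$, $(Tf)^\circ\leq\hT(f^\circ)$.

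For the nontrivial direction (lax extension implies the two whisker equalities), I would prove $\hT(f^\circ\circ r)=(Tf)^\circ\circ\hT r$ by sandwiching. The inequality $\hT(f^\circ\circ r)\geq(Tf)^\circ\circ\hT r$ follows from laxness of $\hT$ together with $\hT(f^\circ)\geq(Tf)^\circ$. The reverse inequality uses $(Tf)^\circ\circ(Tf)_\circ\geq 1^\circ$ and $f_\circ\circ f^\circ\leq 1^\circ$:
\[
\hT(f^\circ\circ r)\;\leq\;(Tf)^\circ\circ(Tf)_\circ\circ\hT(f^\circ\circ r)\;\leq\;(Tf)^\circ\circ\hT(f_\circ)\circ\hT(f^\circ\circ r)\;\leq\;(Tf)^\circ\circ\hT(f_\circ\circ f^\circ\circ r)\;\leq\;(Tf)^\circ\circ\hT r,
\]
where the last step uses monotonicity of $\hT$ (as a 2-functor) applied to $f_\circ\circ f^\circ\circ r\leq r$. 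The equality $\hT(s\circ f_\circ)=\hT s\circ(Tf)_\circ$ is obtained dually, whiskering on the right with $(Tf)^\circ\circ(Tf)_\circ$ and using $\hT(f^\circ)\geq(Tf)^\circ$.

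For the converse direction, assume the two whisker equalities. Instantiating the second with $s=1^\circ_Y$ gives
\[
\hT(f_\circ)=\hT(1^\circ_Y\circ f_\circ)=\hT(1^\circ_Y)\circ(Tf)_\circ\;\geq\;1^\circ_{TY}\circ(Tf)_\circ=(Tf)_\circ,
\]
using the lax-unit inequality $\hT(1^\circ_Y)\geq 1^\circ_{TY}$ built into $\hT$ being a lax 2-functor; the inequality $(Tf)^\circ\leq\hT(f^\circ)$ is obtained symmetrically from the first whisker equality with $r=1^\circ_Y$. So $\hT$ is indeed a lax extension of $T$.

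Finally, for flatness, the same identities $\hT(f_\circ)=\hT(1^\circ_Y)\circ(Tf)_\circ$ and $\hT(f^\circ)=(Tf)^\circ\circ\hT(1^\circ_Y)$ immediately yield: if $\hT(1^\circ_X)=1^\circ_{TX}$ for every $X$ over $\CQ_0$, then $\hT(f_\circ)=(Tf)_\circ$ and $\hT(f^\circ)=(Tf)^\circ$, so $\hT$ is flat; conversely, flatness applied to $f=1_X$ (so that $f_\circ=1^\circ_X$ and $T1_X=1_{TX}$) gives $\hT(1^\circ_X)=1^\circ_{TX}$.

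The main obstacle is the forward direction: one must spot that combining the adjunction $(Tf)_\circ\dashv(Tf)^\circ$ on the left with the extension inequality $\hT(f_\circ)\geq(Tf)_\circ$ and the counit $f_\circ\circ f^\circ\leq 1^\circ$ on the right produces the sharp equality, rather than just the lax inequality that one gets for free.
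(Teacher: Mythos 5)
Your proposal is correct and follows essentially the same route as the paper: the paper cites references for this discrete statement but proves its categorified analogue (Proposition \ref{hT_graph}) by exactly your sandwiching argument, using the adjunction unit/counit of $f_\circ\dashv f^\circ$ together with the extension inequalities and lax functoriality, and settling flatness via the whisker equalities at identities.
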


For monads $\bbS$ and $\bbT$ on $\QSet$, a morphism of lax extensions $\lam:(\bbS,\hat{S})\to(\bbT,\hat{T})$ is a monad morphism $\lam:\bbS\to\bbT$ such that $\lam_{\circ}$ becomes an oplax transformation from $\hat{S}$ to $\hat{T}$, {\em i.e.}, 
\[(\lam_Y)_\circ\circ\hat{S}r\leq\hat{T}r\circ(\lam_X)_\circ,\]
for all $\CQ$-relations $r:X\rto Y$.

The conglomerate of monads on $\QSet$ and their morphisms constitute the metacategory $\mathsf{Mnd}(\QSet)$. The $\QSet$-monads that come equipped with a lax extensions of monads to $\QRel$ and their morphisms form the metacategory $\mathsf{ExtMnd(\QSet})$. One easily proves the following Proposition, similarly to the proof of its non-discrete counterpart that appears below as Proposition \ref{top_cat}.

\begin{prop} $\mathsf{ExtMnd}(\QSet)$ is topological over $\mathsf{Mnd}(\QSet)$.
\end{prop}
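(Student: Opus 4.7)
The plan is to verify that the forgetful functor
\[U:\mathsf{ExtMnd}(\QSet)\to\mathsf{Mnd}(\QSet),\quad(\bbT,\hT)\mapsto\bbT,\]
admits a unique $U$-initial lift for every (possibly large) structured source. Since $U$ is faithful with fibres forming posets (the pointwise-ordered lax extensions of a fixed monad), existence and uniqueness of initial lifts suffice.

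Given a structured source $(\lam_i:\bbS\to\bbT_i)_{i\in I}$ with each $(\bbT_i,\hat T_i)$ laxly extended, I define, for $r:X\rto Y$ in $\QRel$,
\[\hat S r:=\bigwedge_{i\in I}(\lam_{i,Y})^\circ\circ\hat T_i r\circ(\lam_{i,X})_\circ\ :\ SX\rto SY.\]
Using the adjunction $(\lam_{i,Y})_\circ\dv(\lam_{i,Y})^\circ$ in $\QRel$, this $\hat S r$ is precisely the largest $\CQ$-relation $SX\rto SY$ with $(\lam_{i,Y})_\circ\circ\hat S r\leq\hat T_i r\circ(\lam_{i,X})_\circ$ for every $i$, so each $\lam_i$ will automatically become a morphism $(\bbS,\hat S)\to(\bbT_i,\hat T_i)$ once we know $\hat S$ is itself a lax extension of $\bbS$.

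The substantive task is to verify that $\hat S$ indeed laxly extends the monad $\bbS$. Monotonicity in $r$ is immediate. The lax 2-functor conditions $1_{SX}^\circ\leq\hat S(1_X^\circ)$ and $\hat S s\circ\hat S r\leq\hat S(s\circ r)$ follow by distributing composition over the defining infima and using $(\lam_{i,Y})_\circ\circ(\lam_{i,Y})^\circ\leq 1$, $1\leq(\lam_{i,X})^\circ\circ(\lam_{i,X})_\circ$, alongside the analogous properties of each $\hat T_i$. The graph/cograph conditions $(Sf)_\circ\leq\hat S(f_\circ)$ and $(Sf)^\circ\leq\hat S(f^\circ)$ reduce, via the naturality identity $\lam_{i,Y}\circ Sf=T_if\circ\lam_{i,X}$, to those for each $\hat T_i$. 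The main obstacle is the monad conditions (\ref{mon_ext_Rel}) for $\hat S$. The key technical observation is that, because the graph of any map over $\CQ_0$ is left adjoint to its cograph in $\QRel$, whiskering by $(e_X)_\circ$, $(e_Y)^\circ$, $(m_X)_\circ$, or $(m_Y)^\circ$ preserves arbitrary infima of $\CQ$-relations. This lets the infimum defining $\hat S$ be pulled through the whiskerings, reducing the monad conditions for $\hat S$ to those of each $\hat T_i$ combined with the monad-morphism identities $\lam_{i,X}\circ e_X=e^i_X$ and $\lam_{i,X}\circ m_X=m^i_X\circ(T_i\lam_{i,X})\circ\lam_{i,SX}$.

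For $U$-initiality, given $\mu:\bbR\to\bbS$ in $\mathsf{Mnd}(\QSet)$ with $(\bbR,\hat R)$ laxly extended, the same preservation of infima by whiskering with $(\mu_X)_\circ$ converts the desired inequality $(\mu_Y)_\circ\circ\hat R r\leq\hat S r\circ(\mu_X)_\circ$ into the conjunction over $i\in I$ of the morphism conditions for the composites $\lam_i\circ\mu$. Uniqueness of the initial lift is then automatic from the poset structure of the fibres.
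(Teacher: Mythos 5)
Your construction is precisely the one the paper uses: for each $\lam_i$ form the initial extension $(\lam_{i,Y})^\circ\circ\hat T_i r\circ(\lam_{i,X})_\circ$ and then take the pointwise meet, which is exactly how the paper proves the non-discrete counterpart (Proposition \ref{top_cat}) and then declares the discrete case analogous, with all the verifications you spell out being the ones the paper calls ``routine.'' The proposal is correct and takes essentially the same approach.
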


Lax extensions of $\QCat$-monads may be considered analogously to their discrete prototypes (see \cite{LST}). Given a 2-functor $\CT:\QCat\to\QCat$, a \emph{lax extension of $\CT$ to $\QDist$} is a lax functor
$$\hCT:\QDist\to\QDist$$
which coincides with $\CT$ on objects and satisfies the lax extension conditions
\begin{equation}\label{cat_ext_con}
    (\CT f)_*\leq\hCT(f_*),\quad (\CT f)^*\leq\hCT(f^*),
\end{equation}
for all $\CQ$-functors $f:X\to Y$. If these inequalities are in fact equalities, we say that $\hCT$ is {\em flat}. By definition then, a lax extension $\hCT$ of $\CT$ is flat if, and only if, it makes the diagrams
 \[\begin{array}{cc}\bfig \square<800,500>[\QCat^{\rm co}`\QCat^{\rm co}`\QDist`\QDist;\CT`(-)_*`(-)_*`\hCT]\efig & \bfig\square<800,500>[\QCat^{\rm op}`\QCat^{\rm op}`\QDist`\QDist;\CT^{\rm op}`(-)^*`(-)^*`\hCT]\efig
 \end{array}\]
 commute.
 
Given a $2$-monad $\frak{T}=(\CT,\sm,\se)$ on $\QCat$, $\hCT$ is a {\em lax extension of the monad} $\frak{T}$ if it is a lax extension of the 2-functor $\CT$ which makes both $\sm_*:\hCT\hCT\to\hCT$ and $\se_*:1\to\hCT$ oplax, {\em i.e.},
\begin{equation}(\sm_Y)_*\circ\hCT\hCT \varphi\leq\hCT \varphi\circ(\sm_X)_*,\quad
(\se_Y)_*\circ\varphi\leq \hCT\varphi\circ(\se_X)_*\end{equation}
or, equivalently,
\begin{equation}\label{ext_of_2-mon}\hCT\hCT\varphi\leq(\sm_Y)^*\circ\hCT \varphi\circ(\sm_X)_*,\quad
\varphi\leq(\se_Y)^*\circ\hCT \varphi\circ(\se_X)_*,
\end{equation}
for all $\CQ$-distributors $\varphi:X\oto Y$.

It is useful to describe the extension conditions (\ref{cat_ext_con}) equivalently by the {\em left- and right-whiskering} properties, as follows.

\begin{prop}\label{hT_graph}  
For a 2-functor $\CT:\QCat\to\QCat$,
a lax 2-functor $\hCT:\QDist\to\QDist$ that agrees with $\CT$ on objects is a lax extension of $\CT$ if, and only if, 
\begin{equation*}
    \hCT(f^*\circ\phi)=(\CT f)^*\circ\hCT\phi\text{ and }\hCT(\psi\circ f_*)=\hCT\psi\circ(\CT f)_*,
\end{equation*}
for all $\CQ$-functors $f:X\to Y$ and all $\CQ$-distributors $\phi:Z\oto Y$, $\psi:Y\oto Z$. In this case, $\hCT$ is flat
if, and only if, \[\hCT1_X^*=1_{\CT X}^*,\] for all $\CQ$-categories $X$.
\end{prop}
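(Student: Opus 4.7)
The plan is to mirror the argument of the discrete version (Proposition \ref{discrete whisker}), exploiting the adjunctions $f_*\dashv f^*$ in $\QDist$ displayed in (\ref{QCat_QDist_adjunction}). Since $\CT$ is a 2-functor it preserves such adjunctions, so $(\CT f)_*\dashv(\CT f)^*$ holds in $\QDist$ as well; together with the lax functoriality of $\hCT$ (which supplies $\hCT\psi\circ\hCT\phi\leq\hCT(\psi\circ\phi)$ and $1_{\CT X}^*\leq\hCT 1_X^*$), this is what drives both directions.

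For the ``only if'' direction, assume the lax-extension inequalities (\ref{cat_ext_con}). Lax functoriality combined with $(\CT f)_*\leq\hCT(f_*)$ at once yields
\[\hCT\psi\circ(\CT f)_*\leq\hCT\psi\circ\hCT(f_*)\leq\hCT(\psi\circ f_*).\]
For the reverse inequality, I would start from the counit $f_*\circ f^*\leq 1_Y^*$; monotonicity and lax functoriality of $\hCT$ give $\hCT(\psi\circ f_*)\circ\hCT(f^*)\leq\hCT(\psi\circ f_*\circ f^*)\leq\hCT\psi$, after which $(\CT f)^*\leq\hCT(f^*)$ yields $\hCT(\psi\circ f_*)\circ(\CT f)^*\leq\hCT\psi$. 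Transposing this inequality across the adjunction $(\CT f)_*\dashv(\CT f)^*$ produces the desired $\hCT(\psi\circ f_*)\leq\hCT\psi\circ(\CT f)_*$. The right-whiskering identity is proved symmetrically, using the unit $1_X^*\leq f^*\circ f_*$ in place of the counit.

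For the ``if'' direction, I would exploit that for $f:X\to(Y,b)$ one has $b\circ f_*=f_*$ and $f^*\circ b=f^*$, because $b=1_Y^*$ is the identity distributor on $Y$. The left-whiskering identity with $\psi=b$ then gives
\[\hCT(f_*)=\hCT(b\circ f_*)=\hCT(b)\circ(\CT f)_*\geq 1_{\CT Y}^*\circ(\CT f)_*=(\CT f)_*,\]
since $\hCT(b)=\hCT(1_Y^*)\geq 1_{\CT Y}^*$ by the unit lax condition; analogously, the right-whiskering identity with $\phi=b$ yields $\hCT(f^*)\geq(\CT f)^*$, completing the equivalence.

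The flatness criterion follows at once from the same computation. The identity $\hCT(f_*)=\hCT(1_Y^*)\circ(\CT f)_*$ shows that if $\hCT(1_Y^*)=1_{\CT Y}^*$ for every $\CQ$-category $Y$, then $\hCT(f_*)=(\CT f)_*$ for every $\CQ$-functor $f$; dually $\hCT(f^*)=(\CT f)^*$. Conversely, if both equalities hold for all $f$, then taking $f=1_X$ and noting that $(1_X)_*=1_X^*$ while $\CT 1_X=1_{\CT X}$ gives $\hCT(1_X^*)=1_{\CT X}^*$. I expect no genuine obstacle in this proof; the only care-point is keeping the direction of the two adjunction transpositions straight, since whiskering on the left versus on the right exchanges the roles of $f_*$ and $f^*$ (and hence of unit and counit) in the argument.
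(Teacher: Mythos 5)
Your proof is correct and follows essentially the same route as the paper's: the hard half of each whiskering identity is obtained by sandwiching with the adjunction $(\CT f)_*\dashv(\CT f)^*$ and the counit $f_*\circ f^*\leq 1_Y^*$ (the paper writes the transposition out as the chain $\hCT(f^*\circ\phi)\leq(\CT f)^*\circ(\CT f)_*\circ\hCT(f^*\circ\phi)\leq\dots$), while the converse and the flatness criterion come from substituting the identity distributor, exactly as in the paper. One minor imprecision: in the symmetric (right-whiskering) argument the squeeze step still uses the counit, in the form $f_*\circ f^*\circ\phi\leq\phi$, and the transposition still uses the unit $1^*_{\CT X}\leq(\CT f)^*\circ(\CT f)_*$, so the roles of unit and counit are not actually exchanged between the two cases — but this does not affect the validity of the argument.
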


\begin{proof} 
The whiskering conditions give immediately \[\hCT f_*=\hCT(1^*_Y\circ f_*)=\hCT 1^*_X\circ(\CT f)_*\geq(\CT f)_*\] and \[\hCT f^*=\hCT(f_*\circ 1^*_X)=(\CT f)_*\circ\hCT 1^*_X\geq(\CT f)^*.\] 
Conversely, first notice that, on one hand, one has \[\hCT(f^*\circ\phi)\geq\hCT f^*\circ\hCT\phi\geq(\CT f)^*\circ\hCT\phi,\] and, on the other hand, one obtains 
\begin{align*}\hCT(f^*\circ\phi)&\leq(\CT f)^*\circ(\CT f)_*\circ\hCT(f^*\circ\phi)\\
&\leq(\CT f)^*\circ\hCT f^*\circ\hCT(f^*\circ\phi)\\
&\leq(\CT f)^*\circ\hCT(f_*\circ f^*\circ\phi)\\
&\leq(\CT f)^*\circ\hCT\phi,
\end{align*}
which gives $\hCT(f^*\circ\phi)=(\CT f)^*\circ\phi$, as desired. The other equation may be checked similarly, and the non-trivial part of the additional statement follows from the consideration of $\varphi=1_X^*$.
\end{proof}


For $2$-monads $\frak{S}=(\CS,\mathsf{n},\mathsf{d})$ and $\frak{T}=(\CT,\sm,\se)$ on $\QCat$, a morphism of lax extensions $\lam:(\frak{S},\hat{\CS})\to(\frak{T},\hCT)$ is a morphism $\lam:\frak{S}\to\frak{T}$ of monads such that $\lam_*:\hat{\CS}\to\hat{\CT}$ becomes an oplax transformation, {\em i.e.},
\[(\lam_Y)_*\circ\hat{S}\varphi\leq\hat{T}\varphi\circ(\lam_X)_*,\]
for all $\CQ$-distributors $\varphi:X\oto Y$.

With these morphisms the $2$-monads on $\QCat$ which come with a lax extension to $\QDist$ constitute the metacategory $\mathsf{ExtMnd}(\QCat)$. The forgetful functor to the metacategory $\mathsf{Mnd}(\QCat)$ behaves well, as we show next.

\begin{prop}\label{top_cat}
$\mathsf{ExtMnd}(\QCat)$ is topological over $\mathsf{Mnd}(\QCat)$.
\end{prop}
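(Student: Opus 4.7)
The plan is to construct, for any source $(\lam_i\colon\mathfrak{T}\to(\mathfrak{T}_i,\hCT_i))_{i\in I}$ in $\mathsf{Mnd}(\QCat)$ whose codomains are lifted to $\mathsf{ExtMnd}(\QCat)$, the unique initial lift of $\mathfrak{T}$ by setting
\[\hCT\phi:=\bigwedge_{i\in I}(\lam_{i,Y})^*\circ\hCT_i\phi\circ(\lam_{i,X})_*\]
for every $\CQ$-distributor $\phi\colon X\oto Y$. (When $I$ is empty, $\hCT\phi$ is the top distributor, and all required inequalities then hold trivially, as $\top$ composes with graphs and cographs of $\CQ$-functors to give back $\top$.) This formula is forced: by the adjunction $(\lam_{i,Y})_*\dv(\lam_{i,Y})^*$ in $\QDist$, requiring each $\lam_i$ to be a morphism of lax extensions amounts exactly to $\hCT\phi\leq(\lam_{i,Y})^*\circ\hCT_i\phi\circ(\lam_{i,X})_*$ for all $i$, and we take the largest such $\hCT\phi$.

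First I would verify that $\hCT$ is a lax extension of the 2-functor $\CT$. Monotonicity in $\phi$ is immediate. Lax compatibility with composition, $\hCT\psi\circ\hCT\phi\leq\hCT(\psi\circ\phi)$, follows by inserting the counit $(\lam_{i,Y})_*\circ(\lam_{i,Y})^*\leq 1^*$ and invoking the laxness of each $\hCT_i$. The whiskering conditions of Proposition \ref{hT_graph} reduce to the naturality of $\lam_i$, namely $\lam_{i,Y}\circ\CT f=\CT_i f\circ\lam_{i,X}$, which at the distributor level yields $(\lam_{i,Y})_*\circ(\CT f)_*=(\CT_i f)_*\circ(\lam_{i,X})_*$ and its cograph analogue; Proposition \ref{hT_graph} then also gives the lax extension inequalities (\ref{cat_ext_con}).

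The technical heart is the monad extension conditions (\ref{ext_of_2-mon}). The key structural observation is that in any quantaloid, for $\CQ$-functors $f,g$, the operation $\chi\mapsto f^*\circ\chi\circ g_*$ on $\QDist$ is a right adjoint (to $\phi\mapsto f_*\circ\phi\circ g^*$) and therefore preserves arbitrary infima. For the multiplication inequality, monotonicity of $\hCT_i$ together with the whiskering of Proposition \ref{hT_graph} gives $\hCT_i(\hCT\phi)\leq(\CT_i\lam_{i,Y})^*\circ\hCT_i\hCT_i\phi\circ(\CT_i\lam_{i,X})_*$; left- and right-whiskering by $\lam_{i,\CT Y}$ and $\lam_{i,\CT X}$, applying the monad condition for $\hCT_i$, and then using the monad-morphism identity $\sm_{i,Y}\circ\CT_i\lam_{i,Y}\circ\lam_{i,\CT Y}=\lam_{i,Y}\circ\sm_Y$ (and its $X$-analogue) produce the bound $(\sm_Y)^*\circ(\lam_{i,Y})^*\circ\hCT_i\phi\circ(\lam_{i,X})_*\circ(\sm_X)_*$; taking infima and exploiting the inf-preservation of $(\sm_Y)^*\circ(-)\circ(\sm_X)_*$ then delivers $\hCT\hCT\phi\leq(\sm_Y)^*\circ\hCT\phi\circ(\sm_X)_*$. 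The unit inequality reduces to the unit condition for each $\hCT_i$ via $\lam_i\circ\se=\se_i$.

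Finally, each $\lam_i$ is a morphism of lax extensions by applying the counit to the defining infimum. For initiality, given $\mu\colon\mathfrak{S}\to\mathfrak{T}$ in $\mathsf{Mnd}(\QCat)$ such that each composite $\lam_i\circ\mu$ lifts to a morphism of extensions, one uses the adjunction $(\lam_{i,Y})_*\dv(\lam_{i,Y})^*$ together with the inf-preservation of the right-whiskering $\chi\mapsto\chi\circ(\mu_X)_*$ (itself a right adjoint) to deduce $(\mu_Y)_*\circ\hat{\CS}\phi\leq\hCT\phi\circ(\mu_X)_*$. The main obstacle, and the reason for the inf-preservation care throughout, is that in a quantaloid infima do not distribute over composition in general; the saving observation is that composition with graphs from the right and cographs from the left does preserve infima, through the canonical adjunctions $f_*\dv f^*$ in $\QDist$.
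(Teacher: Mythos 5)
Your construction is exactly the paper's: the paper first forms the initial extension along a single monad morphism as $\hat{\CS}\phi=(\lam_Y)^*\circ\hCT\phi\circ(\lam_X)_*$ and then takes the pointwise meet over the family, which is precisely your combined formula, and it declares the remaining verifications routine. Your proposal is correct and moreover supplies the details the paper omits, in particular the observation that $\chi\mapsto f^*\circ\chi\circ g_*$ preserves infima (being a right adjoint), which is the point that makes the meet interact correctly with the whiskering, unit, multiplication, and initiality conditions.
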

\begin{proof}
One easily verifies that, for a morphism of $2$-monads $\lam:\frak{S}\to\frak{T}$ and a lax extension $(\frak{T},\hCT)$, the {\em initial extension} of $\CS$ induced by $\lam$ is given by \[\hat{\CS}\phi=(\lam_{(Y,b)})^*\circ\hCT\phi\circ(\lam_{(X,a)})_*\]
for all $\CQ$-distributors $\phi:(X,a)\oto(Y,b)$. Thus, for a family of monad morphisms $\lam_i:\frak{S}\to\frak{T}_i$ and a family of lax extensions $(\frak{T}_i,\hCT_i)$, for each $i$ one has the initial extension $(\frak{S},\hat{\CS}_i)$ induced by $\lam_i$. To complete the proof, one needs  to show that the meet $\hat{\CS}$ given by $\hat{\CS}\phi=\bw_i\hat{\CS}_i\phi$ for all $\CQ$-distributors $\phi:(X,a)\oto(Y,b)$ is also a lax extension of the monad $\frak{S}$. The verification of this fact is routine.
\end{proof}

\begin{rem}
As a consequence, given a $2$-monad $\frak{T}$ on $\QCat$, on one hand, there is a largest lax extension $\CT^\top$; it maps each $\CQ$-distributor $\phi:X\oto Y$ to the greatest distributor from $\CT X$ to $\CT Y$. On the other hand, the least lax extension also exists,  which we will describe explicitly in Section 4 if $\CT$ maps every Yoneda functor (2.v) to a fully faithful $\CQ$-functor.
\end{rem}

We will now show that monads on $\QSet$ laxly extended to $\QRel$ induce $2$-monads on $\QCat$ that are laxly extended to $\QDist$. For that, first let $\hT$ be a lax extension of a functor $T:\QSet\to\QSet$. As observed in \cite{Tholen2009} in the quantalic context, given a $\CQ$-category $(X,a)$, since
\[1_{TX}\leq\hT 1_X\leq\hT a,\quad\hT a\circ\hT a\leq\hT(a\circ a)=\hT a,\] $(TX,\hT a)$ is also a $\CQ$-category. If $f:(X,a)\to(Y,b)$ is a $\CQ$-functor, then
\[\hT a\leq\hT(f^\circ\circ b\circ f_\circ)=(T f)^\circ\circ\hT b\circ(T f)_\circ\] shows that $Tf:(TX,\hT a)\to(TY,\hT b)$ is also a $\CQ$-functor. In this way one obtains a $2$-functor \begin{equation}\label{ext_fun}\CT:\QCat\to\QCat,\quad (X,a)\mapsto(TX,\hT a).\end{equation}
Furthermore, if $\bbT=(T,m,e)$ is a monad on $\Set/\CQ_0$ and $\hT$ is a lax extension of $\bbT$ to $\QRel$, oplaxness of $m_{\circ}$ and $e_{\circ}$ with respect to $\hT$ implies that, for every $\CQ$-category $(X,a)$, both $m_X:TTX\to TX$ and $e_X:X\to TX$ become $\CQ$-functors
 \begin{equation}\label{ext_mon}\sm_X:\hT\hT(X,a)\to\hT(X,a), \quad \se_X:(X,a)\to \hT(X, a).\end{equation}
Hence, the monad $\bbT$ on $\QSet$ with its lax extension $\hT$ to $\QRel$ has been shown to ``lift" along $o:\QCat\to\QSet$ to a $2$-monad  $(\CT,\mathsf{m},\mathsf{e})$ on $\QCat$,
in the sense that one has  
\begin{equation}\label{lifting}o\CT=To, \quad o\mathsf{m}=mo, \quad o\mathsf{e}=eo.\end{equation}

We will now verify that $(\CT,\mathsf{m},\mathsf{e})$ comes with a lax extension to $\QDist$ that is obtained from $\hT$ in a straightforward manner.

\begin{prop}\label{ext_ext} For a monad $\mathbb T$ on $\QSet$ with a lax extension $\hT$ to $\QRel$, the $2$-functor $\CT:\QCat\lra\QCat$ defined by {\rm (\ref{ext_fun})} admits a uniquely determined lax extension $\hCT$ to $\QDist$ with ${\bf o}\hCT=\hT{\bf o}$, for the forgetful ${\bf o}:\QDist\to\QRel$; it commutes with $\hT$ also via the ``discrete" ${\bf d}$, as shown by the commutative diagrams
\[
\begin{array}{cc}
\bfig \square<800,500>[\QRel`\QRel`\QDist`\QDist;\hT`{\bf d}`{\bf d}`\hCT] \efig &
\bfig \square<800,500>[\QDist`\QDist`\QRel`\QRel;\hCT`{\bf o}`{\bf o}`\hT] \efig
\end{array}
\]
Furthermore, $\hCT$ is flat, and it yields a lax extension of the monad $\frak{T}=(\CT,\mathsf{m},\mathsf{e})$ to $\QDist$ defined by {\rm (\ref{ext_fun})} and {\rm (\ref{ext_mon})}.
\end{prop}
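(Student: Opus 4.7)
Since distributors are determined by their underlying $\CQ$-relations once source and target are fixed, the stipulation $\mathbf{o}\hCT=\hT\mathbf{o}$ forces any candidate to satisfy $\hCT\phi=\hT\phi$ at the relational level, settling uniqueness at once. My plan is to verify the following, in order: (a) $\hT\phi$ is a well-defined morphism in $\QDist$ between the lifted $\CQ$-categories $\CT(X,a)=(TX,\hT a)$; (b) the resulting $\hCT$ is a lax 2-functor satisfying the whiskering characterisation of Proposition \ref{hT_graph}; (c) $\hCT$ is flat and renders the two diagrams commutative; (d) the monad natural transformations $\sm_*$ and $\se_*$ are oplax with respect to $\hCT$.

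For (a), given $\phi:(X,a)\oto(Y,b)$, lax functoriality of $\hT$ combined with the identity $b\circ\phi\circ a=\phi$ immediately yields $\hT b\circ\hT\phi\circ\hT a\leq\hT\phi$. For the reverse inequality, the lax extension condition specialised to $1_X$ gives $1_{TX}^{\circ}=(T1_X)^{\circ}\leq\hT 1_X^{\circ}$, and monotonicity together with $1_X^{\circ}\leq a$ then yields $1_{TX}^{\circ}\leq\hT a$; similarly on $Y$. Inserting these identities produces $\hT\phi\leq\hT b\circ\hT\phi\circ\hT a$. Applied to $\phi=a$ this also confirms that $(TX,\hT a)$ is a $\CQ$-category, and the convenient one-sided variants $\hT b\circ\hT\phi=\hT\phi=\hT\phi\circ\hT a$ will be used repeatedly below.

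For (b) and (c), lax functoriality of $\hCT$ is inherited from that of $\hT$ since composition is computed identically in $\QRel$ and $\QDist$. The left-whiskering identity of Proposition \ref{hT_graph} then follows from
\[\hCT(f^{*}\circ\phi)=\hT(f^{\circ}\circ b\circ\phi)=\hT(f^{\circ}\circ\phi)=(Tf)^{\circ}\circ\hT\phi=(\CT f)^{*}\circ\hCT\phi,\]
where the second equality uses $b\circ\phi=\phi$, the third uses Proposition \ref{discrete whisker}, and the last uses $(\CT f)^{*}=(Tf)^{\circ}\circ\hT b$ combined with $\hT b\circ\hT\phi=\hT\phi$; right-whiskering is dual. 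Flatness is then tautological: $\hCT 1_{(X,a)}^{*}=\hT a=1_{\CT(X,a)}^{*}$. The $\mathbf{o}$-square commutes by definition, and the $\mathbf{d}$-square commutes because on the underlying $\CQ$-relations both composites return $\hT r$ and the discrete lifted $\CQ$-category structures match.

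Finally for (d), I expand $(\sm_{X})_{*}=\hT a\circ(m_{X})_{\circ}$ and start from the discrete oplaxness inequality $(m_{Y})_{\circ}\circ\hT\hT\phi\leq\hT\phi\circ(m_{X})_{\circ}$. Pre-composing with $\hT b$ on the left (absorbed via $\hT b\circ\hT\phi=\hT\phi$) and inserting $\hT a$ on the right (via $\hT\phi\circ\hT a=\hT\phi$) delivers $(\sm_{Y})_{*}\circ\hCT\hCT\phi\leq\hCT\phi\circ(\sm_{X})_{*}$; the unit case is entirely analogous. The conceptual weight of the proof sits in step (a), relating the lifted $\CQ$-category hom $\hT a$ to the pointwise application of $\hT$ on distributors; everything else is a systematic translation of the already-established discrete theory to the enriched $\QDist$-setting.
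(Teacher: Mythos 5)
Your proof is correct and follows essentially the same route as the paper's: uniqueness is forced by ${\bf o}\hCT=\hT{\bf o}$, existence reduces to checking that $\hT\phi$ is a distributor between the lifted structures $(TX,\hT a)$ and $(TY,\hT b)$, and flatness together with the monad oplaxness conditions come from the absorption identities $\hT b\circ\hT\phi=\hT\phi=\hT\phi\circ\hT a$ combined with the discrete conditions (\ref{mon_ext_Rel}). The only cosmetic differences are that you verify the lax extension conditions via the whiskering criterion of Proposition~\ref{hT_graph} where the paper checks $\hCT(f_*)=(\CT f)_*$ and $\hCT(f^*)=(\CT f)^*$ directly, and that you state the oplaxness of $\sm_*$ and $\se_*$ in un-adjointed form rather than in the form (\ref{ext_of_2-mon}).
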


\begin{proof} The constraint  ${\bf o}\hCT=\hT{\bf o}$ dictates
putting $\hCT\varphi(\Fx,\Fy)=\hT\varphi(\Fx,\Fy)$ for every distributor $\phi:(X,a)\oto(Y,b)$ and all $\Fx\in TX, \Fy\in TY$. To show the existence of $\hCT$, we first confirm that $\hT\phi$ is indeed a $\CQ$-distributor from $(TX,\hT a)$ to $(TY,\hT b)$: 
\[\hT b\circ\hT\phi\circ\hT a\leq\hT(b\circ\phi\circ a)\leq\hT\phi.\]
Second, for every $\CQ$-category $(X,a)$ one has
\[1^*_{\CT (X,a)}=\hT a=\hCT 1^*_{(X,a)},\]
and $\hCT\psi\circ\hCT\phi\leq\hCT(\psi\circ\phi)$ holds for
all distributors $\phi:(X,a)\oto(Y,b)$ and $\psi:(Y,b)\oto(Z,c)$, since
$\hT\psi\circ\hT\phi\leq\hT(\psi\circ\phi)$ holds for their underlying $\CQ$-relations.
Thus, $\hCT$ is a flat lax functor.

Thirdly, for a $\CQ$-functor $f:(X,a)\to(Y,b)$, Proposition \ref{discrete whisker} gives
\[\hCT(f_*)=\hT(b\circ f_\circ)=\hT b\circ(Tf)_\circ=(\CT f)_*,\]
and
\[\hCT(f^*)=\hT(f^\circ \circ b)=(Tf)^\circ\circ\hT b=(\CT f)^*.\]
Hence, $\hCT$ is a lax extension of the $2$-functor $\CT$.

Using the fact that $\hT\varphi$ is a $\CQ$-distributor $(TX,\hT a)\oto(TY,\hT b)$ for every $\CQ$-distributor $\varphi:(X,a)\oto(Y,b)$, one checks the oplaxness of $\sm$ and $\se$, as follows:
\[(\se_Y)^*\circ\hCT\varphi\circ(\se_X)_*
=(e_Y)^\circ\circ\hT b\circ\hT\varphi\circ\hT a\circ(e_X)_\circ
=(e_Y)^\circ\circ\hT\varphi\circ(e_X)_\circ
\geq\varphi,\]
\[(\sm_Y)^*\circ\hCT\varphi\circ(\sm_X)_*
=(m_Y)^\circ\circ\hT b\circ\hT\varphi\circ\hT a\circ(m_X)_\circ
=(m_Y)^\circ\circ\hT\varphi\circ(m_X)_\circ
\geq\hT\hT\varphi
=\hCT\hCT\varphi.\] Therefore, $\hCT$ is a lax extension of the $2$-monad $\frak{T}=(\CT,\sm,\se)$.
\end{proof}

\begin{thm}\label{Delta}
The process $(\bbT,\hat{T})\mapsto(\frak{T},\hCT)$ described by Proposition {\rm \ref{ext_ext}} is the object assignment of a full and faithful functor \[\Delta:\mathsf{ExtMnd}(\QSet)\to \mathsf{ExtMnd}(\QCat).\]
\end{thm}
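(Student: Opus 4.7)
The plan is to extend the assignment $(\bbT,\hat T)\mapsto(\frak T,\hat{\CT})$ to a functor on morphisms, then dispatch functoriality and faithfulness (both routine) and fullness (the substantive step).

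To define $\Delta$ on a monad morphism $\lambda:(\bbS,\hat S)\to(\bbT,\hat T)$, I would take $(\Delta\lambda)_{(X,a)}$ to be the underlying map $\lambda_X:SX\to TX$. Applying the oplaxness of $\lambda_{\circ}$ to the $\CQ$-relation $a:X\rto X$ yields $(\lambda_X)_{\circ}\circ\hat S a\leq\hat T a\circ(\lambda_X)_{\circ}$, which is precisely the statement that $\lambda_X$ is a $\CQ$-functor $\CS(X,a)\to\CT(X,a)$. Naturality at $\CQ$-functors and the monad morphism identities for $\Delta\lambda$ lift from their $\QSet$-counterparts via (\ref{lifting}) and faithfulness of $o$. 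For the oplaxness of $(\Delta\lambda)_{*}:\hat{\CS}\to\hat{\CT}$ at $\varphi:(X,a)\oto(Y,b)$, I would use (\ref{graph_def}) to rewrite $(\Delta\lambda_{(Y,b)})_{*}=\hat T b\circ(\lambda_Y)_{\circ}$ and then combine, in sequence, the oplaxness of $\lambda_{\circ}$, the laxness $\hat T b\circ\hat T\varphi\leq\hat T\varphi$ (via the distributor identity $b\circ\varphi=\varphi$), and the inequality $1_{TX}^{\circ}\leq\hat T a$. Functoriality of $\Delta$ is automatic since composition is preserved on underlying maps.

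Faithfulness is immediate: if $\Delta\lambda=\Delta\mu$, evaluate at the discrete $\CQ$-category $dX$ and apply $o$ to recover $\lambda_X=\mu_X$ for every $X$ over $\CQ_0$. For fullness, given $\Lambda:(\frak S,\hat{\CS})\to(\frak T,\hat{\CT})$, I would set $\lambda_X:=o\Lambda_{dX}$. Naturality of $\Lambda$ at maps in $\QSet$ (viewed as $\CQ$-functors between discrete $\CQ$-categories) provides naturality of $\lambda$ in $\QSet$, and the monad morphism identities for $\lambda$ descend from those for $\Lambda$ through $o$. Naturality of $\Lambda$ at the $\CQ$-functor $\id_X:dX\to(X,a)$ (available because $1_X^{\circ}\leq a$), combined with $\CS(\id_X)=\id_{SX}$ and $\CT(\id_X)=\id_{TX}$, forces $o\Lambda_{(X,a)}=\lambda_X$, so that $\Lambda$ is completely determined by $\lambda$ and coincides with $\Delta\lambda$ once $\lambda\in\mathsf{ExtMnd}(\QSet)$ is established. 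It therefore remains to check that $\lambda_{\circ}$ is oplax with respect to $\hat S,\hat T$: for $r:X\rto Y$, I would apply oplaxness of $\Lambda_{*}$ to the $\CQ$-distributor ${\bf d}r:dX\oto dY$ and pass to underlying $\CQ$-relations via ${\bf o}\hat{\CS}=\hat S{\bf o}$ and ${\bf o}\hat{\CT}=\hat T{\bf o}$ from Proposition \ref{ext_ext}.

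The principal obstacle is this last step, because $\hat S$ and $\hat T$ are not assumed flat, so $\CS(dX)=(SX,\hat S 1_X^{\circ})$ is typically not discrete. The oplaxness of $\Lambda_{*}$ therefore only yields
\[
\hat T 1_Y^{\circ}\circ(\lambda_Y)_{\circ}\circ\hat S r\leq\hat T r\circ\hat T 1_X^{\circ}\circ(\lambda_X)_{\circ}
\]
rather than the clean discrete inequality. To recover the latter I would absorb the spurious factors in two steps: on the right, $\hat T({\bf d}r)$ is a distributor between $(TX,\hat T 1_X^{\circ})$ and $(TY,\hat T 1_Y^{\circ})$, so $\hat T r\circ\hat T 1_X^{\circ}=\hat T r$; on the left, $1_{TY}^{\circ}\leq\hat T 1_Y^{\circ}$ gives $(\lambda_Y)_{\circ}\leq\hat T 1_Y^{\circ}\circ(\lambda_Y)_{\circ}$. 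Together these yield $(\lambda_Y)_{\circ}\circ\hat S r\leq\hat T r\circ(\lambda_X)_{\circ}$, the required discrete oplaxness.
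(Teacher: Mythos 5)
Your proposal is correct and follows essentially the same route as the paper: define $\Delta\lambda$ by the underlying maps, use oplaxness of $\lambda_\circ$ at the hom $a$ to get $\CQ$-functoriality, and for fullness recover $\lambda_X=o\Lambda_{dX}$ via naturality at $1_X:dX\to(X,a)$. The only difference is that you spell out the step the paper dismisses as ``one easily sees''--namely that oplaxness of $\Lambda_*$ at ${\bf d}r$ descends to oplaxness of $\lambda_\circ$ despite $\CS dX$ not being discrete--and your absorption argument there is exactly right.
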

\begin{proof}
For every morphism $\lam:(\bbS,\hat{S})\to(\bbT,\hT)$ in $\mathsf{ExtMnd}(\QSet)$, since $(\lam_X)_\circ\circ\hat{S} a\leq \hT a\circ(\lam_X)_\circ$ for every $\CQ$-category $(X,a)$,
the map $\lam_X$ is in fact a $\CQ$-functor $(SX,\hat{S}a)\to(TX,\hT a)$. Furthermore, since
 \[(\lam_Y)_*\circ\hat{\CS}\phi=(\lam_Y)_\circ\circ\hat{S}\phi
\leq\hat{T}\phi\circ(\lam_X)_\circ=\hat{\CT}\phi\circ(\lam_X)_*\]
for every $\CQ$-distributor $\phi:(X,a)\oto(Y,b)$,
the morphism $\lam:(\bbS,\hat{S})\to(\bbT,\hT)$ can be considered as a morphism $\Delta\lam:(\frak{S},\hat{\CS})=\Delta(\bbS,\hat{S})\to(\frak{T},\hCT)=\Delta(\bbT,\hT)$.

Now consider any morphism $\kappa:(\frak{S},\hat{\CS})\to(\frak{T},\hCT)$ in $\mathsf{ExtMnd}(\QCat)$. Since for
every $\CQ$-category $(X,a)$ one has the $\CQ$-functor $\ep_{(X,a)}=1_X:do(X,a)=(X,1^\circ_X)
\to(X,a)$, naturality of $\kappa:\CS\to\CT$ gives $\kappa_{(X,a)}\cdot\CS 1_X=\CT 1_X\cdot\kappa_{dX}$, so that $o\kappa_{(X,a)}=o\kappa_{dX}$. Hence, putting $\lam_X=o\kappa_{dX}$ by necessity, one easily sees that $\lam:(\bbS,\hat{S})\to(\bbT,\hT)$ is indeed a morphism of lax extensions with $\Delta\lam=\kappa$. Therefore, $\Delta$ is full and faithful.
\end{proof}

We now construct a left inverse to the functor $\Delta$ and consider any $2$-monad $\frak{T}=(\CT,\sm,\se)$ on $\QCat$. One then obtains a monad $\bbT=(T,m,e)$ on $\QSet$ by composing the Eilenberg-Moore adjunction of $\frak{T}$ with the adjunction $d\dashv o:\QCat\to\QSet$, so that
\begin{equation}
\label{res_mon} T=o\CT d, \quad m=o\,\sm\, d\cdot o\CT\varepsilon\CT d,\quad e=o\,\se\, d,
\end{equation}
where $\ep$ is the counit of $d\dashv o$ (as in the proof of Theorem \ref{Delta}). 

\begin{prop}\label{non_dis_to_dis} 
For every $2$-functor $\CT:\QCat\to\QCat$, a lax extension $\hCT$ of $\CT$ to $\QDist$ yields the lax extension $\hT={\bf o}\hCT {\bf d}$ of $T=o\CT d:\Set/\CQ_0\to\Set/\CQ_0$ to $\QRel$. Moreover, if $\CT$ belongs to a $2$-monad $\frak{T}=(\CT,\mathsf{m},\mathsf{e})$, then $\hat{T}$ is a lax extension of $\bbT=(T,m,e)$ as defined by {\rm (\ref{res_mon})}.
\end{prop}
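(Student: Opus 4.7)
The approach is to verify both claims by reducing the conditions for $\hT$ to those already established for $\hCT$, exploiting that $\mathbf{d}$ is a strict 2-functor while $\mathbf{o}$ is lax on identities but strict on composition.

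For the first claim, that $\hT=\mathbf{o}\hCT\mathbf{d}$ extends $T=o\CT d$, I would first verify that $\hT$ is a lax 2-functor $\QRel\to\QRel$: laxness on identities follows by chaining $\hCT 1^*_{\mathbf{d}X}\geq 1^*_{\CT\mathbf{d}X}$ with $\mathbf{o}1^*_{\CT\mathbf{d}X}\geq 1^\circ_{TX}$, while laxness on composition is immediate from the corresponding properties of $\mathbf{d}$, $\hCT$, and $\mathbf{o}$. For the extension inequalities $(Tf)_\circ\leq\hT(f_\circ)$ and $(Tf)^\circ\leq\hT(f^\circ)$, the key observation is the identity $\mathbf{d}(f_\circ)=(df)_*$ (and dually $\mathbf{d}(f^\circ)=(df)^*$); applying $\hCT$ with its lax extension property then gives $(\CT df)_*\leq\hCT\mathbf{d}(f_\circ)$, and passing through $\mathbf{o}$ while using $\mathbf{o}1^*_{\CT\mathbf{d}Y}\geq 1^\circ_{TY}$ yields the desired bound.

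For the monad claim, the pivotal preparatory identity is
\[\mathbf{d}\mathbf{o}\phi \;=\; \ep_Y^*\circ\phi\circ(\ep_X)_*\]
for every $\CQ$-distributor $\phi:X\oto Y$ (with $\ep_Z:\mathbf{d}oZ\to Z$ the counit of $d\dashv o$), which holds because the right-hand side evaluates pointwise to $b\circ\phi\circ a=\phi$. Applying $\hCT$ and invoking the whiskering formulas of Proposition \ref{hT_graph} yields
\[\hCT\mathbf{d}\mathbf{o}\phi \;=\; (\CT\ep_Y)^*\circ\hCT\phi\circ(\CT\ep_X)_*.\]
Setting $\phi=\hCT\mathbf{d}r$ and applying $\mathbf{o}$ expresses $\hT\hT r$ in a form to which the oplaxness bound $\hCT\hCT\mathbf{d}r\leq(\sm_{\mathbf{d}Y})^*\circ\hCT\mathbf{d}r\circ(\sm_{\mathbf{d}X})_*$ from (\ref{ext_of_2-mon}) applies, producing
\[\hT\hT r\;\leq\;\mathbf{o}((\sm_{\mathbf{d}Y}\cdot\CT\ep_{\CT\mathbf{d}Y})^*)\circ\hT r\circ\mathbf{o}((\sm_{\mathbf{d}X}\cdot\CT\ep_{\CT\mathbf{d}X})_*).\]
Since the underlying map of $\sm_{\mathbf{d}X}\cdot\CT\ep_{\CT\mathbf{d}X}$ is precisely $m_X$ by (\ref{res_mon}), and since $\hT r=\mathbf{o}\hCT\mathbf{d}r$ absorbs the factors $\mathbf{o}1^*_{\CT\mathbf{d}Y}$ and $\mathbf{o}1^*_{\CT\mathbf{d}X}$ arising via $f_*=b\circ f_\circ$ and $f^*=f^\circ\circ b$ (because $\hCT\mathbf{d}r$ is already a distributor), this estimate collapses to $\hT\hT r\leq(m_Y)^\circ\circ\hT r\circ(m_X)_\circ$. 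Running the same argument with the oplaxness of $\se_*$ in place of $\sm_*$ delivers $r\leq(e_Y)^\circ\circ\hT r\circ(e_X)_\circ$, giving (\ref{mon_ext_Rel}).

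The main obstacle I anticipate is the type-bookkeeping in the monad step, namely distinguishing carefully among the sets $TX$, $o\CT\CT\mathbf{d}X$, and $TTX$, and identifying which hom-factors cancel harmlessly against the already-distributor $\hT r$. The substantive mathematical content, however, is essentially a direct transcription of the oplaxness conditions from $\hCT$ to $\hT$ via the counit identity above.
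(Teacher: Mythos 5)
Your proposal is correct and follows essentially the same route as the paper's proof: the extension inequalities come from $\mathbf{d}(f_\circ)=(df)_*$ and $\mathbf{d}(f^\circ)=(df)^*$ together with laxness of $\mathbf{o}$ on identities, and the monad conditions rest on the counit identity $\mathbf{d}\mathbf{o}\phi=\ep_Y^*\circ\phi\circ(\ep_X)_*$, the whiskering compatibility of $o$ and $\mathbf{o}$, and the oplaxness of $\sm_*$ and $\se_*$. The only difference is cosmetic: you run the multiplication estimate starting from $\hT\hT r$, whereas the paper unfolds $(m_Y)^\circ\circ\hT r\circ(m_X)_\circ$ and arrives at the same chain read in the opposite direction.
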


\begin{proof} 
As a composite of lax 2-functors, $\hT$ is also one. The lax extension conditions for $\hT$ follow from
 $$\hT(f_\circ)={\bf o}\hCT{\bf d}(f_\circ)={\bf o}\hCT((df)_*)\geq{\bf o}(\CT df)_*\geq(o\CT df)_\circ$$ and 
 $$\hT(f^\circ)={\bf o}\hCT{\bf d}(f^\circ)={\bf o}\hCT((df)^*)\geq{\bf o}(\CT df)^*\geq(o\CT df)^\circ,$$ 
 for every map $f:X\to Y$ over $\CQ_0$.
 Hence $\hT$ is a lax extension of the functor $T$.

To check that $\hT$ satisfies condition (\ref{mon_ext_Rel}), we first notice that $o$ and {\bf o} respect whiskering, in the sense that, for all $\CQ$-functors $f:W\to X,g:Z\to Y$ and $\CQ$-distributors $\phi:X\oto Y$, one has
\[(og)^\circ\circ({\bf o}\phi)\circ(of)_\circ=
(og)^\circ\circ 1^*_Y\circ({\bf o}\phi)\circ 1^*_X\circ(of)_\circ
={\bf o} g^*\circ{\bf o}\phi\circ{\bf o} f_*={\bf o}(g^*\circ\phi\circ f_*).\]
Thus, for every $\CQ$-relation $r:X\rto Y$ one obtains,
\[r={\bf od} r\leq{\bf o}(\se_{dY}^*\circ\hCT{\bf d} r\circ(\se_{dX})_*)
=(o\se_{dY})^\circ\circ{\bf o}\hCT{\bf d} r\circ(o\se_{dX})_\circ=e_X^\circ\circ\hT r\circ (e_Y)^\circ,
\]
and
\begin{align*}
(m_Y)^\circ\circ\hT r\circ(m_X)_\circ
&=(o(\sm_{dY}\cdot\CT\ep_{\CT dY}))^\circ\circ{\bf o}\hCT{\bf d} r\circ(o(\sm_{dX}\cdot\CT\ep_{\CT dX}))_\circ\\
&={\bf o}((\sm_{dY}\cdot\CT\ep_{\CT dY})^*\circ\hCT{\bf d} r\circ(\sm_{dX}\cdot\CT\ep_{\CT dX})_*)\\
&={\bf o}((\CT\ep_{\CT dY})^*\circ\sm_{dY}^*\circ\hCT{\bf d} r\circ(\sm_{dX})_*\circ(\CT\ep_{\CT dX})_*)\\
&\geq{\bf o}(\CT\ep_{\CT dY})^*\circ\hCT\hCT{\bf d} r\circ(\CT\ep_{\CT dX})_*)\\
&={\bf o}\hCT(\ep_{\CT dY}^*\circ\hCT{\bf d} r\circ(\ep_{\CT dX})_*)\\
&={\bf o}\hCT{\bf d}{\bf o}\hCT{\bf d} r=\hT\hT r;
\end{align*}
here the penultimate equality holds since $\ep_Y^*\circ\varphi\circ(\ep_X)_*={\bf do}\varphi$ for every $\CQ$-distributor $\varphi: X\oto Y$ \end{proof}

To extend the process $\Gamma:(\CT,\hat{\CT})\mapsto(o\CT d,{\bf o}\hat{\CT}{\bf d})$ functorially,
let $\kappa:(\frak{S},\hat{\CS})\to(\frak{T},\hCT)$ be a morphism of lax extensions of $2$-monads on $\QCat$. Then $\lam_X=o\kappa_{dX}$ defines a morphism 
$\Gamma(\kappa)=\lam:(o\CS d,{\bf o}\hat{\CS}{\bf d})\to(o\CT d,{\bf o}\hCT{\bf d})$ between lax extensions of monads on $\QSet$, and the remaining verifications of the assertion of the following theorem become a straightforward exercise.

\begin{thm}\label{inverse} $\Gamma: \mathsf{ExtMnd}(\QCat)\to\mathsf{ExtMnd}(\QSet)$ is a functor that is left inverse to the embedding $\Delta$.
\end{thm}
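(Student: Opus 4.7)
The plan is to verify three things: that the morphism assignment $\kappa\mapsto(o\kappa_{dX})_{X}$ really lands in $\mathsf{ExtMnd}(\QSet)$; that $\Gamma$ respects composition and identities; and that $\Gamma\Delta=\id$. For the first, I would exploit the identity $o\kappa_{(X,a)}=o\kappa_{dX}$, already established in the proof of Theorem \ref{Delta}, to rewrite every occurrence of $o\kappa_Z$ (for a $\CQ$-category $Z$) as a $\lam$-component of the form $\lam_{oZ}$. Naturality of $\lam:S\to T$ then follows from naturality of $\kappa$ evaluated at $\CQ$-functors of the form $df:dX\to dY$; the monad-morphism conditions $\lam\cdot e^S=e^T$ and $\lam\cdot m^S=m^T\cdot T\lam\cdot\lam S$ follow from the corresponding conditions on $\kappa$ evaluated at objects $dX$ and transported through $o$, using formula (\ref{res_mon}) together with the fact that $\ep_{\CT dX}$ is the identity on underlying sets in order to simplify the correction term $o\CT\ep_{\CT d}$.

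For the oplaxness $(\lam_Y)_\circ\circ\hat{S}r\leq\hT r\circ(\lam_X)_\circ$, I would take any $\CQ$-relation $r:X\rto Y$, view it as the discrete distributor ${\bf d}r:dX\oto dY$, and instantiate the oplaxness of $\kappa_*:\hat{\CS}\to\hCT$ at ${\bf d}r$. Applying ${\bf o}$ to the resulting inequality and using that ${\bf o}{\bf d}$ is the identity on underlying $\CQ$-relations, together with the whiskering identity $(og)^\circ\circ{\bf o}\phi\circ(of)_\circ={\bf o}(g^*\circ\phi\circ f_*)$ from the proof of Proposition \ref{non_dis_to_dis}, one recovers the desired inequality. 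Functoriality of $\Gamma$ is then immediate, since both $o$ and evaluation at $dX$ are strictly functorial operations on composable morphisms.

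For $\Gamma\Delta=\id$, I would compute the composite on an object $(\bbT,\hT)$: writing $\Delta(\bbT,\hT)=(\frak{T},\hCT)$ with $\CT(X,a)=(TX,\hT a)$, one gets $o\CT dX=TX$ and $o\CT df=Tf$, which recovers the underlying functor. The correction factor $o\CT\ep_{\CT d}$ in (\ref{res_mon}) reduces to the identity, since $\ep_{\CT dX}:d(TX)\to\CT dX$ is the identity on underlying sets and $\CT$, in the image of $\Delta$, acts as $T$ on underlying maps; hence $m=o\sm d$ and $e=o\se d$ return the original monad data of $\bbT$. The extension identity ${\bf o}\hCT{\bf d}=\hT$ is then immediate from the relation ${\bf o}\hCT=\hT{\bf o}$ recorded in Proposition \ref{ext_ext}. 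On morphisms, $\Gamma(\Delta\lam)_X=o(\Delta\lam)_{dX}=\lam_X$ holds since $\Delta\lam$ retains $\lam$ as its underlying family.

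The principal obstacle is bookkeeping: a general $2$-functor $\CT$ on $\QCat$ need not satisfy $o\CT=To$ strictly, and the gap is bridged by the counit $\ep$ of $d\dashv o$ appearing in (\ref{res_mon}); one must carefully distinguish which $\CQ$-categories carry their discrete structure and which carry the structure induced by $\CT$, so that each invocation of naturality, of $\ep$, or of the laxness of ${\bf o}$ reduces the expression at hand to a form in which $\lam$ appears directly. Once this bookkeeping is in place, each remaining computation collapses to routine manipulations.
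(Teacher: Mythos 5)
Your proposal is correct and follows essentially the same route as the paper: the paper likewise defines $\Gamma$ on morphisms by $\lam_X=o\kappa_{dX}$, delegates the object assignment to Proposition \ref{non_dis_to_dis}, and declares the remaining verifications a straightforward exercise, which is exactly what you have written out. One small caution on phrasing: for a general $2$-functor $\CT$ the correction term $o\CT\ep_{\CT dX}$ in (\ref{res_mon}) is \emph{not} the identity (only $o\,\ep_{\CT dX}$ is); in the monad-morphism check the two correction terms on either side of the equation are matched via naturality of $\ep$ at $\kappa_{dX}$ rather than eliminated, and they genuinely collapse to identities only in the $\Gamma\Delta=\mathrm{id}$ computation, where $o\CT=To$ holds.
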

In the next section we will restrict the domain of $\Gamma$, so that the respective restrictions of $\Gamma$ and $\Delta$ become adjoint.

\section{The minimal lax extension of a $2$-monad on $\QCat$}

At first, let us consider just a $2$-functor $\CT$ on $\QCat$. By Proposition \ref{hT_graph}, any lax extension $\hCT$ of $\CT$ must satisfy
\[\hCT\phi=\hCT((\olphi)^*\circ(\sy_X)_*)=(\CT\olphi)^*\circ\hCT 1^*_{\CP X }\circ(\CT\sy_X)_*\geq(\CT\olphi)^*\circ(\CT\sy_X)_*,\] for all $\CQ$-distributors $\phi:X\to Y$.
Putting
 \[\overline{\CT}\varphi:=(\CT\olphi)^*\circ(\CT\sy_X)_*\]
 for all $\phi$, conversely one proves easily  (see {\rm \cite{Akhvlediani2010,Stubbe2010}}):

\begin{prop}\label{Barr ext}
Every $2$-functor $\CT:\QCat\to\QCat$ admits a least lax extension to $\QDist$,  given by $\overline{\CT}$.
\end{prop}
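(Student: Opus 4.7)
The minimality assertion is effectively established already in the paragraph preceding the statement: by the whiskering property of Proposition~\ref{hT_graph}, any lax extension $\hCT$ of $\CT$ satisfies
\[
\hCT\phi \;=\; \hCT((\olphi)^*\circ(\sy_X)_*) \;=\; (\CT\olphi)^*\circ\hCT 1^*_{\CP X}\circ(\CT\sy_X)_* \;\geq\; (\CT\olphi)^*\circ(\CT\sy_X)_* \;=\; \overline\CT\phi,
\]
using the lax-identity inequality $\hCT 1^*_{\CP X}\geq 1^*_{\CT\CP X}$. So the real work reduces to showing that $\overline\CT$ is itself a lax extension of $\CT$.

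The plan rests on four structural identities, each a routine computation: (i) $\ola{1^*_X} = \sy_X$; (ii) $\ola{f^*} = \sy_Y\cdot f$ for a $\CQ$-functor $f:X\to Y$; (iii) the Yoneda identity $\phi^\od\cdot\sy_Y = \olphi$ for any $\CQ$-distributor $\phi:X\oto Y$, which exhibits $\phi^\od$ as the left Kan extension of $\olphi$ along $\sy_Y$; and (iv) $\ola{\psi\circ\phi} = \phi^\od\cdot\olpsi$ for composable $\CQ$-distributors. Given these, monotonicity of $\overline\CT$ is immediate from 2-functoriality of $\CT$ together with the order-preservation of $(-)^*$ and the obvious monotonicity of $\phi\mapsto\olphi$. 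The lax-identity inequality $1^*_{\CT X}\leq\overline\CT 1^*_X = (\CT\sy_X)^*\circ(\CT\sy_X)_*$ is then the unit of the $\QDist$-adjunction $(\CT\sy_X)_*\dashv(\CT\sy_X)^*$ combined with~(i).

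The substantive step is lax compositionality. Rewriting $(\CT\olphi)^* = (\CT\sy_Y)^*\circ(\CT\phi^\od)^*$ via~(iii) and then collapsing the middle by the counit $(\CT\sy_Y)_*\circ(\CT\sy_Y)^*\leq 1^*_{\CT\CP Y}$, one obtains
\[
\overline\CT\psi\circ\overline\CT\phi \;\leq\; (\CT\olpsi)^*\circ(\CT\phi^\od)^*\circ(\CT\sy_X)_* \;=\; (\CT(\phi^\od\cdot\olpsi))^*\circ(\CT\sy_X)_* \;=\; \overline\CT(\psi\circ\phi),
\]
where the penultimate step uses 2-functoriality of $\CT$ and the final step uses~(iv). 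The lax-extension inequalities~\eqref{cat_ext_con} come out similarly: for $(\CT f)^*\leq\overline\CT(f^*)$, plug~(ii) into the definition to obtain $\overline\CT(f^*) = (\CT f)^*\circ(\CT\sy_Y)^*\circ(\CT\sy_Y)_*$ and apply the unit; for $(\CT f)_*\leq\overline\CT(f_*)$, first note the inequality $\sy_X\leq\ola{f_*}\cdot f$ (a consequence of (iii) applied to $\phi=f_*$ together with the unit $\id_{\CP X}\leq(f_*)^\od\cdot f_!$ of the $\QCat$-adjunction $f_!\dashv(f_*)^\od$), then apply $\CT$ and invoke the unit of $(\CT\ola{f_*})_*\dashv(\CT\ola{f_*})^*$. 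The main obstacle throughout is the careful bookkeeping of the variances of $(-)_*$ and $(-)^*$ and of the Yoneda/co-Yoneda duality; once the identities (i)--(iv) are in hand, everything else reduces to routine manipulations of the adjunctions $g_*\dashv g^*$ in $\QDist$.
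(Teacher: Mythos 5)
Your proposal is correct and follows the same route as the paper: the minimality inequality $\hCT\phi\geq\overline{\CT}\phi$ via the whiskering property of Proposition~\ref{hT_graph} is exactly the paper's argument in the paragraph preceding the statement, and the verification that $\overline{\CT}$ is itself a lax extension (which the paper delegates to the cited references with ``one proves easily'') is carried out by you with the standard Yoneda identities $\olphi=\phi^\od\cdot\sy_Y$, $\ola{\psi\circ\phi}=\phi^\od\cdot\olpsi$ and the units/counits of the adjunctions $g_*\dashv g^*$, all of which check out. The only cosmetic point is that your derivation of $\sy_X\leq\ola{f_*}\cdot f$ tacitly uses naturality of the Yoneda functor ($\sy_Y\cdot f=f_!\cdot\sy_X$), which is not among your listed identities (i)--(iv) but is standard and could also be verified pointwise.
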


\begin{cor}\label{uniqueness}
Any flat lax extension to $\QDist$ of an endofunctor $\CT$ must equal $\ovl{\CT}$.
\end{cor}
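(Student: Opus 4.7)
The plan is to show that any lax extension $\hCT$ of $\CT$ is pinned down on an arbitrary distributor by its value on identity distributors, via the whiskering characterization of Proposition \ref{hT_graph}, so that flatness forces $\hCT = \ovl{\CT}$.

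First I would fix a $\CQ$-distributor $\phi:X\oto Y$ and recall the standard factorization $\phi = (\olphi)^* \circ (\sy_X)_*$ used just before Proposition \ref{Barr ext}, where $\olphi:Y\to\CP X$ is the $\CQ$-functor corresponding to $\phi$ via the adjunction (2.v). Then I would apply the two whiskering identities from Proposition \ref{hT_graph} in succession: first the left-whiskering identity to strip off $(\olphi)^*$, and then the right-whiskering identity (applied to $(\sy_X)_* = 1^*_{\CP X}\circ(\sy_X)_*$) to strip off $(\sy_X)_*$. This yields
\[
\hCT\phi \;=\; (\CT\olphi)^* \circ \hCT 1^*_{\CP X} \circ (\CT\sy_X)_*.
\]

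Second, I would invoke the flatness hypothesis, which by Proposition \ref{hT_graph} is exactly the statement that $\hCT 1^*_{\CP X} = 1^*_{\CT\CP X}$. Substituting into the above expression collapses the middle factor to an identity and gives
\[
\hCT\phi \;=\; (\CT\olphi)^* \circ (\CT\sy_X)_* \;=\; \ovl{\CT}\phi,
\]
by the definition of $\ovl{\CT}$ in Proposition \ref{Barr ext}. Since $\phi$ was arbitrary, $\hCT = \ovl{\CT}$.

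There is no real obstacle here: the argument is entirely a direct application of Proposition \ref{hT_graph}, and the only thing to be careful about is recognizing that the factorization $\phi = (\olphi)^* \circ (\sy_X)_*$ has been established in the preliminaries (the adjunction displayed in (2.v)), so that both whiskering identities are applicable.
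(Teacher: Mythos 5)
Your proposal is correct and follows essentially the same route as the paper: the displayed computation preceding Proposition \ref{Barr ext} already establishes $\hCT\phi=(\CT\olphi)^*\circ\hCT 1^*_{\CP X}\circ(\CT\sy_X)_*$ via the whiskering identities of Proposition \ref{hT_graph}, and the corollary then follows exactly as you argue, by using the flatness criterion $\hCT 1^*_{\CP X}=1^*_{\CT\CP X}$ to collapse the middle factor and recover $\ovl{\CT}\phi$.
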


But is $\ovl{\CT}$ always flat? By definition one has
\[\overline{\CT}1^*_X=(\CT(\ola{1_X^*}))^*\circ(\CT\sy_X)_*=(\CT\sy_X)^*\circ(\CT\sy_X)_*,\]
so that the right-hand-side term needs to equal $1_{\CT X}^*$ in order for us to guarantee that $\ovl{\CT}$ be flat; that is:
like $\sy_X$, the $\CQ$-functor $\CT\sy_X$ needs to be fully faithful. This and the previous arguments easily prove the implications ${\rm (i)\Lra(ii)\Lra(iii)\Lra(iv)\Lra(i)}$ of the following Proposition.

\begin{prop}\label{uni_fla_ext} For a $2$-functor $\CT:\QCat\lra\QCat$, the following assertions are equivalent:
\begin{enumerate}[(i)]
\item $\CT\sy_X$ is a fully faithful $\CQ$-functor, for every $\CQ$-category $X$;
\item $\overline{\CT}$ is a flat lax extension of $\CT$ to $\QDist$;
\item $\CT$ admits some flat lax extension to $\QDist$;
\item $\CT$ admits a unique flat lax extension to $\QDist$.
\end{enumerate}
\end{prop}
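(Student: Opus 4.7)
The plan is to close the cycle $\mathrm{(i)}\Rightarrow\mathrm{(ii)}\Rightarrow\mathrm{(iii)}\Rightarrow\mathrm{(iv)}\Rightarrow\mathrm{(i)}$ by assembling two already-established results, namely Proposition \ref{Barr ext} and Corollary \ref{uniqueness}, together with a single pivotal computation evaluating $\ovl{\CT}$ on an identity distributor.

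The pivotal step is to observe that from the defining formula $\ovl{\CT}\phi=(\CT\olphi)^*\circ(\CT\sy_X)_*$ and from $\ola{1_X^*}=\sy_X$ one obtains
\[\ovl{\CT}\,1_X^*=(\CT\sy_X)^*\circ(\CT\sy_X)_*\]
for every $\CQ$-category $X$. Since a $\CQ$-functor $f$ is fully faithful precisely when $f^*\circ f_*=1_X^*$, the right-hand side equals $1_{\CT X}^*$ if and only if $\CT\sy_X$ is fully faithful.

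With this identity in hand, $\mathrm{(i)}\Rightarrow\mathrm{(ii)}$ is immediate: Proposition \ref{Barr ext} already provides that $\ovl{\CT}$ is a lax extension of $\CT$, and Proposition \ref{hT_graph} reduces flatness to the requirement $\hCT\,1_X^*=1_{\CT X}^*$ for all $X$, which is exactly $\mathrm{(i)}$ via the pivotal computation. The step $\mathrm{(ii)}\Rightarrow\mathrm{(iii)}$ is trivial. For $\mathrm{(iii)}\Rightarrow\mathrm{(iv)}$, Corollary \ref{uniqueness} asserts that any flat lax extension of $\CT$ must equal $\ovl{\CT}$, so existence of such an extension automatically forces uniqueness. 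Finally, $\mathrm{(iv)}\Rightarrow\mathrm{(i)}$ uses the same corollary in reverse: the unique flat lax extension must be $\ovl{\CT}$, whence $\ovl{\CT}\,1_X^*=1_{\CT X}^*$, and the pivotal computation yields full faithfulness of $\CT\sy_X$.

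No serious obstacle appears, since the hard work has already been done in Proposition \ref{Barr ext} and Corollary \ref{uniqueness}; the only conceptual point is the recognition that flatness of a lax extension is controlled entirely by its behaviour on identity distributors, and that for the minimal extension $\ovl{\CT}$ this behaviour is in turn dictated solely by what $\CT$ does to Yoneda $\CQ$-functors. All four conditions are thus different shadows of the same fully-faithfulness phenomenon.
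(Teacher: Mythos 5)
Your proposal is correct and follows exactly the paper's own argument: the authors likewise compute $\overline{\CT}1^*_X=(\CT\sy_X)^*\circ(\CT\sy_X)_*$ and then close the cycle $\mathrm{(i)}\Rightarrow\mathrm{(ii)}\Rightarrow\mathrm{(iii)}\Rightarrow\mathrm{(iv)}\Rightarrow\mathrm{(i)}$ using Proposition \ref{Barr ext}, Corollary \ref{uniqueness}, and the characterization of flatness via identity distributors from Proposition \ref{hT_graph}. No discrepancies to report.
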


Under these equivalent conditions we are able to prove that, when $\CT$ carries a monad structure $\frak{T}=(\CT,\sm,\se)$, the minimal lax extension $\ovl{\CT}$ of the 2-functor $\CT$ actually constitutes a lax extension of the 2-monad $\frak{T}$
to $\QDist$:

\begin{thm}\label{fla_imp_mon}
Let $\frak{T}=(\CT,\mathsf{m},\mathsf{e})$ be a $2$-monad on $\QCat$ such that $\CT$ preserves the full fidelity of all Yoneda functors. Then $\ovl{\CT}$ is a flat lax extension of the monad $\frak{T}$ to $\QDist$, and it is the only one.
\end{thm}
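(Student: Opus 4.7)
Uniqueness is immediate: any flat lax extension of the $2$-monad $\frak{T}$ is \emph{a fortiori} a flat lax extension of the $2$-functor $\CT$, so by Corollary \ref{uniqueness} it must coincide with $\ovl{\CT}$. Moreover, by the hypothesis and Proposition \ref{uni_fla_ext}, $\ovl{\CT}$ is already a flat lax extension of the $2$-functor $\CT$; what remains is to verify the two monad oplaxness conditions (\ref{ext_of_2-mon}) for $\se$ and $\sm$.

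My plan is to work directly with the closed formula $\ovl{\CT}\varphi=(\CT\olphi)^*\circ(\CT\sy_X)_*$ of Proposition \ref{Barr ext}, combining three ingredients: naturality of $\se$ and $\sm$ at the $\CQ$-functors $\olphi:Y\to\CP X$ and $\sy_X:X\to\CP X$; the standard contra/covariance $(hg)^*=g^*\circ h^*$ and $(hg)_*=h_*\circ g_*$; and the universal unit-of-adjunction inequality $f^*\circ f_*\geq 1^*$ arising from $f_*\dashv f^*$ in $\QDist$. For the unit, substitution and naturality of $\se$ rewrite
\[(\se_Y)^*\circ\ovl{\CT}\varphi\circ(\se_X)_*=(\CT\olphi\cdot\se_Y)^*\circ(\CT\sy_X\cdot\se_X)_*=\olphi^*\circ(\se_{\CP X})^*\circ(\se_{\CP X})_*\circ(\sy_X)_*,\]
which is at least $\olphi^*\circ(\sy_X)_*=\varphi$ by the adjunction inequality at $\se_{\CP X}$; here I rely on the canonical Yoneda-style factorization $\varphi=\olphi^*\circ(\sy_X)_*$, which is the $\CQ$-distributor analog of the relational factorization $(**)$ in the introduction.

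For the multiplication, the extra step is to evaluate $\ovl{\CT}\ovl{\CT}\varphi$ in a parallel closed form. Applying $\ovl{\CT}$ to its own formula for $\ovl{\CT}\varphi$ and combining the whiskering identities of Proposition \ref{hT_graph} with the flatness equalities $\ovl{\CT}(g_*)=(\CT g)_*$ and $\ovl{\CT}(g^*)=(\CT g)^*$ yields
\[\ovl{\CT}\ovl{\CT}\varphi=(\CT\CT\olphi)^*\circ(\CT\CT\sy_X)_*.\]
The very same naturality-plus-adjunction pattern, now applied to $\sm$ at $\olphi$ and $\sy_X$, rewrites $(\sm_Y)^*\circ\ovl{\CT}\varphi\circ(\sm_X)_*$ as
\[(\CT\CT\olphi)^*\circ(\sm_{\CP X})^*\circ(\sm_{\CP X})_*\circ(\CT\CT\sy_X)_*,\]
which dominates $\ovl{\CT}\ovl{\CT}\varphi$ by the adjunction inequality at $\sm_{\CP X}$.

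The substantive step is the closed-form evaluation of $\ovl{\CT}\ovl{\CT}\varphi$: flatness of $\ovl{\CT}$ is indispensable there, because without the sharp equalities $\ovl{\CT}(g_*)=(\CT g)_*$ and $\ovl{\CT}(g^*)=(\CT g)^*$ the nested extension would only yield inequalities in the wrong direction and the argument would fail to close. Once that identity is in hand, the rest is a mechanical exercise in tracking the contra- and covariance of $(-)^*$ and $(-)_*$ past the monad naturality squares of $\se$ and $\sm$, terminated by the universal adjunction inequality.
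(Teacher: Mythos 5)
Your proposal is correct and follows essentially the same route as the paper's proof: the same closed formula $\ovl{\CT}\varphi=(\CT\olphi)^*\circ(\CT\sy_X)_*$, the same naturality-of-$\se$ and $\sm$ rewriting, the same adjunction inequality $f^*\circ f_*\geq 1^*$, and the same use of flatness (via the whiskering identities) to identify $\ovl{\CT}\,\ovl{\CT}\varphi$ with $(\CT\CT\olphi)^*\circ(\CT\CT\sy_X)_*$. Your remark that flatness is the indispensable ingredient in that last identification matches exactly the paper's parenthetical justification of its penultimate equality.
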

\begin{proof}
Given a distributor $\varphi:X\oto Y$, one has
\begin{align*}
(\mathsf{e}_Y)^*\circ\overline{\CT}\varphi\circ(\mathsf{e}_X)_*
&=(\mathsf{e}_Y)^*\circ(\CT\olphi)^*\circ(\CT\sy_X)_*\circ(\mathsf{e}_X)_*\\
&=(\CT\olphi\cdot\mathsf{e}_Y)^*\circ(\CT\sy_X\cdot\mathsf{e}_X)_*\\
&=(\mathsf{e}_{\sP X}\cdot\olphi)^*\circ(\mathsf{e}_{\sP X}\cdot\sy_X)_*\\
&=(\olphi)^*\circ(\mathsf{e}_{\sP X})^*\cdot(\mathsf{e}_{\sP X})_*\circ(\sy_X)_*\\
&\geq (\olphi)^*\circ(\sy_X)_*\\
&=\phi,
\end{align*}
and
\begin{align*}
(\mathsf{m}_Y)^*\circ\overline{\CT}\phi\circ(\mathsf{m}_X)_*
&=(\mathsf{m}_Y)^*\circ(\CT\olphi)^*\circ(\CT\sy_X)_*\circ(\mathsf{m}_X)_*\\
&=(\CT\olphi\cdot\mathsf{m}_Y)^*\circ(\CT\sy_X\cdot\mathsf{m}_X)_*\\
&=(\mathsf{m}_{\sP X}\cdot\CT\CT\olphi)^*\circ(\mathsf{m}_{\sP X}\cdot\CT\CT\sy_X)_*\\
&=(\CT\CT\olphi)^*\circ(\mathsf{m}_{\sP X})^*\circ(\mathsf{m}_{\sP X})_*\circ(\CT\CT\sy_X)_*\\
&\geq(\CT\CT\olphi)^*\circ(\CT\CT\sy_X)_*\\
&=\overline{\CT}((\CT\olphi)^*\circ(\CT\sy_X)_*)\\
&=\overline{\CT}\,\overline{\CT}\phi,
\end{align*}
with the penultimate equality holding since $\ovl{\CT}$ is flat, by Proposition \ref{uni_fla_ext}. Therefore, $\ovl{\CT}$ is a lax extension of $(\CT,\sm,\se)$ by (\ref{ext_of_2-mon}).
\end{proof}

The corresponding assertion of the Theorem is not valid in the discrete environment: for a monad on $\QSet$, there may exist several flat extensions to $\QRel$, as the following easy example shows.
\begin{exmp} Let $\CQ$ be the Lawvere quantale $([0,\infty],\geq,+,0)$, Then, trivially,  the identity functor on $\QRel$ provides a flat lax extension of the identity monad $\bbI$ on $\Set$ to $\QRel$, but so does $\check{I}$ defined by
\[\check{I}r(x,y)=\begin{cases}
0 & \text{if}\ r(x,y)<\infty,\\
\infty & \text{if}\ r(x,y)=\infty, 
\end{cases}\]
for every $\CQ$-relation $r:X\rto Y.$
\end{exmp}


We already proved in Proposition \ref{ext_ext} that, for every lax extension $\hT$ of a monad $\bbT$ on $\QSet$, $\Delta(\bbT,\hT)$ gives a $2$-monad on $\QCat$ with a {\em flat} lax extension to $\QDist$. Hence, when we denote by 
$\mathsf{FlatExtMnd}(\QCat)$
the full subcategory of $\mathsf{ExtMnd}(\QCat)$ of flat lax extensions of monads on $\QCat$, we know that the functor
$\Delta$ of Theorem \ref{Delta} takes values in that subcategory. We will now show that the respective restrictions
 \[\mathsf{ExtMnd}(\QSet)\to\mathsf{FlatExtMnd}(\QCat)\to\mathsf{ExtMnd}(\QSet)\] 
 of $\Delta$ and $\Gamma$ are adjoint to each other. In order to simplify matters, let us replace $\mathsf{FlatExtMnd}(\QCat)$ by the isomorphic full subcategory $\mathsf{ffYMnd}\QCat$ of the metacategory $\mathsf{Mnd}(\QCat)$, containing those $2$-monads of $\QCat$ whose endofunctor preserves the {\em full fidelity of all Yoneda $\CQ$-functors}: see
 Proposition \ref{uni_fla_ext}(i) and Theorem \ref{fla_imp_mon}. Hence, we will show that the functors
 \[\mathsf{ExtMnd}(\QSet)\to^{\Delta'}\mathsf{ffYMnd}(\QCat)\to^{\Gamma'}\mathsf{ExtMnd}(\QSet)\] 
 are adjoint to each other: $\Delta'\dashv\Gamma'$. Here $\Delta'$ assigns to $(\bbT,\hT)$ the monad defined by (\ref{ext_fun}) and (\ref{ext_mon}), and $\Gamma'$ assigns to 
 $\mathfrak{T}=(\CT,\sm,\se)$
  the monad defined by (\ref{res_mon}), provided with the lax extension $\ovl{T}={\bf o}\ovl{\CT}{\bf d}$ (see Proposition \ref{non_dis_to_dis}).
 
 \begin{thm}\label{coref} $\Delta'$ embeds $\mathsf{ExtMnd}(\QSet)$ into $\mathsf{ffYMnd}(\QCat)$ as a full coreflective subcategory.
\end{thm}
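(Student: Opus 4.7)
The plan is to exhibit $\Gamma'$ as a right adjoint to $\Delta'$, with identity unit and a counit built from the counit of $d\dashv o$. From Theorem \ref{Delta} the functor $\Delta$ is fully faithful, and by Proposition \ref{ext_ext} together with Proposition \ref{uni_fla_ext} its image lies in $\mathsf{ffYMnd}(\QCat)$, so the restriction $\Delta'$ is a full embedding. Theorem \ref{inverse} gives $\Gamma'\Delta'=\id$, and I take the unit $\eta:\id\to\Gamma'\Delta'$ of the intended adjunction to be the identity.

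For the counit at $\frak{T}=(\CT,\sm,\se)\in\mathsf{ffYMnd}(\QCat)$, note that $\Delta'\Gamma'\frak{T}$ sends $(X,a)$ to $(TX,\hT a)$ with $T=o\CT d$ and $\hT a={\bf o}\ovl{\CT}{\bf d}a$. Writing $\ep:do\to\id_{\QCat}$ for the counit of $d\dashv o$ (whose components are identities on underlying sets), I define
\[(\ep_{\frak{T}})_{(X,a)}:=o\CT\ep_{(X,a)}:TX\lra o\CT(X,a),\]
viewed as a map into the $\CQ$-category $\CT(X,a)$. The key step is to show that this is a $\CQ$-functor. Since $\ep_{(X,a)}$ is the identity on underlying sets, a direct pointwise check gives ${\bf d}a=\ep_{(X,a)}^*\circ(\ep_{(X,a)})_*$ as a $\CQ$-distributor $dX\oto dX$. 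The whiskering property (Proposition \ref{hT_graph}) together with the flatness of $\ovl{\CT}$ (Theorem \ref{fla_imp_mon}) then yields
\[\ovl{\CT}({\bf d}a)=(\CT\ep_{(X,a)})^*\circ(\CT\ep_{(X,a)})_*,\]
so that $\hT a(\Fx,\Fy)=1^*_{\CT(X,a)}((o\CT\ep_{(X,a)})\Fx,(o\CT\ep_{(X,a)})\Fy)$ for all $\Fx,\Fy\in TX$. This shows $(\ep_{\frak{T}})_{(X,a)}$ to be even a fully faithful $\CQ$-functor, and constitutes the main non-routine step.

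The remaining verifications are standard diagram chases: naturality of $\ep_{\frak{T}}$ in $(X,a)$ is inherited from that of $\ep$ after applying $\CT$ and $o$, while naturality of $\ep$ in $\frak{T}$ follows from the naturality of any monad morphism $\kappa:\frak{S}\to\frak{T}$ at the components $\ep_{(X,a)}$. Compatibility of $\ep_{\frak{T}}$ with the units and multiplications is obtained from \textup{(\ref{res_mon})} together with the naturality of $\se$ and $\sm$ at $\ep_{(X,a)}$.

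Finally, with $\eta=\id$, the triangle identities reduce to $\Gamma'\ep=\id$ and $\ep\Delta'=\id$. For $\frak{T}\in\mathsf{ffYMnd}(\QCat)$, $(\Gamma'\ep_{\frak{T}})_X=o(\ep_{\frak{T}})_{dX}=o\CT\ep_{dX}=\id_{TX}$, since $\ep_{dX}=\id_{dX}$ by the triangle identity of $d\dashv o$ (as $dX$ lies in the image of $d$). For $(\bbT,\hT)\in\mathsf{ExtMnd}(\QSet)$, the functor $\CT$ of $\Delta'(\bbT,\hT)$ acts on underlying sets by $T$, so $\CT\ep_{(X,a)}=T(o\ep_{(X,a)})=T\id_X=\id_{TX}$, whence $(\ep_{\Delta'(\bbT,\hT)})_{(X,a)}=\id$. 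Therefore $\Delta'\dashv\Gamma'$, confirming that $\Delta'$ is a full coreflective embedding.
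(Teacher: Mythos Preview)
Your proof is correct and follows essentially the same approach as the paper: identity unit, counit built from $\CT\ep$, the key computation ${\bf d}a=\ep_{(X,a)}^*\circ(\ep_{(X,a)})_*$ combined with flatness of $\ovl{\CT}$ to show the counit components are (fully faithful) $\CQ$-functors, and the triangle identities via $\ep d=\id$ and $o\ep=\id$. The paper spells out the monad-morphism verification with explicit diagrams rather than citing naturality of $\se,\sm$ at $\ep_{(X,a)}$, but the argument is the same.
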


\begin{proof}  Since, by Theorem \ref{inverse}, $\Gamma'$ is left inverse of $\Delta'$, the identity transformations $1_T$
are the obvious candidates to serve as the units 
$(\bbT,\hT)\to \Gamma'\Delta'(\bbT,\hT)$ 
of the adjunction, where $\mathbb{T}=(T,m,e)$. We will now describe the counits 
\[\iota^{\CT}:\Delta'\Gamma'\mathfrak{T}\to\mathfrak{T},\]
for all $2$-monads $\mathfrak{T}=(\CT,\sm,\se)$ on $\QCat$, such that $\CT$ makes the $\CQ$-functors $\CT\sy_X$ fully faithful, for all $\CQ$-categories $X=(X,a)$.
To this end, with the counit $\ep:do\to1$ of the adjunction $d\dashv o$ described in Section 2, consider the $\CQ$-functors $\CT\ep_{(X,a)}:\CT(X,1_X^{\circ})=\CT do(X,a)\to\CT(X,a)$. 
Since $\Gamma'$ assigns to $\CT$ the monad $\bbT$ with its endofunctor $T=o\CT d$ laxly extended by $\ovl{T}={\bf o}\ovl{\CT}{\bf d}$, we can define the $(X,a)$-component 
\[\iota^{\CT}_{(X,a)}:(\Delta'\Gamma'\CT)(X,a)=(TX,\ovl{T}a)\to\CT(X,a)\]
of $\iota^{\CT}$ to have the same underlying maps as $\CT\ep_{(X,a)}$, so that $o\,\iota^{\CT}_{(X,a)}=o\CT\ep_{(X,a)}$, but must first verify that $\iota^{\CT}_{(X,a)}$ is indeed a $\CQ$-functor. But since the $\CQ$-relation $a:X\rto X$ 
satisfies $a=a\circ a$, so that ${\bf d}a=(\ep_{(X,a})^*\circ(\ep_{(X,a)})_*$, and since $o$ and ${\bf o}$ respect whiskering (see the proof of Proposition \ref{non_dis_to_dis}) and $\ovl{\CT}$ is flat, we see that
\begin{align*}
\ovl{T} a&={\bf o}\ovl{\CT}{\bf d}\, a\\
&={\bf o}\ovl{\CT}((\ep_{(X,a)})^*\circ (\ep_{(X,a)})_*)\\
&={\bf o}((\CT \ep_{(X,a)})^*\circ(\CT \ep_{(X,a}))_*)\\
&=(o\CT\ep_{(X,a)})^\circ\circ1_{\CT(X,a)}^*\circ(o\CT\ep_{(X,a)})_\circ\\
&=(o\, \iota^{\CT}_{(X,a)})^{\circ}\circ 1_{\CT(X,a)}^*\circ(o\, \iota^{\CT}_{(X,a)})_{\circ}.
\end{align*}
Naturality of $\iota^{\CT}$ follows immediately from the naturality of $\CT\ep$ since the corresponding naturality diagrams have the same underlying maps, and it is also easy to see that $\iota^{\CT}$ respects the monad structures. Indeed, since the left square of
\[
\begin{array}{cc}
\bfig \square<800,500>[do`1_{\QCat}`\CT do`\CT;\ep`edo`e`\CT\ep] \efig &
\bfig \square<800,500>[\Delta'\Gamma'1_{\QCat}`1_{\QCat}`\Delta'\Gamma'\CT`\CT;1`\Delta'\Gamma'e`e`\iota^{\CT}] \efig
\end{array}
\]
commutes, so does the right one, since it has the same underlying maps as the left one when read componentwise for every $(X.a)$ in $\QCat$. Consequently, $\iota^{\CT}$ respects the units of the monads, and for the preservation of the multiplication one argues similarly. 

To check that $\iota^{\CT}$ is natural in $\CT$, we just need to observe that, for a morphism $\alpha:\CT\to\CS$ of $2$-monads, the left square of 
\[
\begin{array}{cc}
\bfig \square<800,500>[\CT do`\CS do`\CT`\CS;\alpha do`\CT\ep`\CS\ep`\alpha] \efig &
\bfig \square<800,500>[\Delta'\Gamma'\CT`\Delta'\Gamma'\CS`\CT`\CS;\Delta'\Gamma'\alpha`\iota^{\CT}`\iota^{\CS}`\alpha] \efig
\end{array}
\]
commutes; consequently, since the right diagram has the same underlying maps as the left one when read componentwise for every $\CQ$-category $(X,a)$, it commutes as well.

Finally, for the verification of the triangular identities of the desired adjunction, since the units are identity morphisms, it suffices to show that all morphisms $\Gamma'\iota^{\CT}:\Gamma'\CT\to \Gamma'\CT$ and 
$\iota^{\Delta'(\bbT,\hT)}:\Delta'(\bbT,\hT)\to\Delta'(\bbT,\hT)$ are identity transformations. But this follows again from the fact that the components of $\ep d$ and $o\,\ep$ are identity morphisms, since with $\CT'=\Delta'(\bbT,\hT)$ one has
\[o((\Gamma'\iota^{\CT})_X)=o\,\iota^{\CT}_{dX}=o\CT\ep_{dX}\quad\text{and}\quad o\,\iota{\CT'}_{(X,a)}=o\CT'\ep_{(X,a)} =To\ep_{(X,a)},\]
respectively for all sets $X$ over $\CQ$ and all $\CQ$-categories $(X,a)$.
\end{proof}

\begin{rem}\label{ini_of_fla} (1) For a flat lax extension $(\frak{T},\ovl{\CT})$ of a $2$-monad $\frak{T}=(\CT,\sm,\se)$ on $\QCat$, with the calculation given in the proof above one sees that $\iota^{\CT}:\Delta\Gamma(\frak{T},\ovl{\CT})\to(\frak{T},\ovl{\CT})$ is an initial morphism with respect to the topological functor of Proposition \ref{top_cat}.

(2) According to Theorem \ref{coref}, $\mathsf{ExtMnd}(\QSet)$ is eqivalent to the full coreflective subcategory of $\mathsf{ffYMnd}(\QCat)$ formed by
all those $2$-monads $\mathfrak{T}=(\CT.\sm,\se)$ on $\QCat$ for which $\iota^{\CT}$ is an isomorphism. Up to monad isomorphism, they may be described by the following characteristic property of their endofunctor $\CT$:  for all 
$\CQ$-categories $(X,a)$, the underlying set over $\CQ_0$ of $\CT(X,a)$ is equal to that of $\CT(X,1_X^{\circ})$,
 and the structure $1^*_{\CT(X,a)}$ may be computed as ${\bf o}\ovl{\CT}{\bf d}a$, which amounts to
\[1^*_{\CT(X,a)}(\Fx,\Fy)=1^*_{\CT\CP(X,1_X^{\circ})}((\CT \sy_{(X,1_X^{\circ})})(\Fx),(\CT\ola{a})(\Fy)) \]
for all elements $\Fx,\Fy$ in the underlying set of $\CT(X,a)$; here $\ola{a}$ is to be treated as a $\CQ$-functor $(X,1_X^{\circ})\to \CP(X,1_X^{\circ})$. Roughly, $\CT$ must be completely determined by its effect on discrete $\CQ$-categories and discrete presheaf $\CQ$-categories, and their $\CQ$-functors.

(3) Without reference to Theorem \ref{coref}, already from Proposition \ref{uni_fla_ext} and Theorem \ref{inverse} we may infer that, for a given monad $\bbT=(T,m,e)$ on $\Set/\CQ_0$, the assignment
\[(\bbT,\hT)\mapsto(\CT,\sm,\se)\] 
defined by equations {\rm (\ref{ext_fun})} and {\rm (\ref{ext_mon})} establishes a bijection between the lax extensions of $\bbT$ and the $2$-monads $\frak{T}=(\CT,\mathsf{m},\mathsf{e})$ on $\QCat$ such that $o\CT=To$, $o\sm=mo$, $o\se=eo$, and $\CT$ preserves the full fidelity of all Yoneda functors.
\end{rem}

\section{The presheaf and Hausdorff monads on $\QCat$ and their discrete coreflections}

We now present some old and new examples of $2$-monads on $\QCat$, starting with the presheaf and copresheaf monads and their composites, as investigated in \cite{LST}, from which we then obtain new ``Hausdorff-type" submonads, one of which was introduced in \cite{Akhvlediani2010} and further developed in \cite{Stubbe2010}. An application of the coreflector $\Sigma'$ to them leads to various known lax extensions of $\Set$-monads, as presented in \cite{Hofmann2014} and previous works, such as \cite{Clementino2003,Clementino2004}.
 

For the presheaf $2$-monad $(\CP,\sfs,\sy)$, the minimal lax extension of $\CP$ to $\QDist$ may be computed as follows: 
\begin{align*}
\overline{\CP}\phi(\si,\tau)
&=(((\overleftarrow{\phi})_!)^*\circ((\sy_X)_!)_*)(\si,\tau)\\
&=1^*_{\CP\CP X}(\si\circ(\sy_X)^*,\tau\circ(\overleftarrow{\phi})^*)\\
&=1^*_{\CP X}(\si,\tau\circ(\overleftarrow{\phi})^*\circ(\sy_X)_*)\\
&=1^*_{\CP X}(\si,\tau\circ\phi)
\end{align*}
for every $\CQ$-distributor $\phi:X\oto Y$ and all $\si\in\CP X,\tau\in\CP Y$; consequently, 
\begin{equation}\label{ext_presheaf}
\overline{\CP}\phi=(\phi^\odot)^*.
\end{equation}
Similarly one obtains
\begin{equation}\label{ext_copresheaf}
\overline{\CP^\dagger}\phi=(\phi^\oplus)_*,
\end{equation} for all distributors $\phi:X\oto Y$.

The two composites of $\CP$ and $\CP^\dagger$ give rise to two $2$-monads on $\QCat$ \cite{Stubbe2013, Pu2015, LST}. One is the double presheaf $2$-monad
$(\CP\CP^\dagger,\Fs,\Fy)$, whose multiplication $\Fs$ and unit $\Fy$ are given by
\[\Fs_X=(\syd_{\CP\CPd X}\cdot \sy_{\CPd X})_*^\odot:\CP\CP^\dagger\CP\CP^\dagger X\to\CP\CP^\dagger X,\]
\[\Fy_X=\sy_{\CP^\dagger X}\cdot\sy^\dagger_X:X\to\CP\CP^\dagger X.\]
The other is the double copresheaf $2$-monad $(\CP^\dagger\CP,\Fs^\dagger,\Fy^\dagger)$, whose multiplication $\Fsd$ and unit $\Fyd$ are given by
\[\Fsd_X=((\sy_{\CPd\CP X}\cdot\syd_{\CP X})^*)^{\oplus}:\CPd\CP\CPd\CP X\to\CPd\CP X,\]
\[\Fyd_X=\syd_{\CP X}\cdot\sy_X:X\to\CPd\CP X.\]
The minimal lax extensions of $\CP\CP^{\dagger}$ and $\CP^{\dagger}\CP$ may be expressed in terms of the lax extensions $\ovl{\CP},\ovl{\CP^\dagger}$, as follows: 
\begin{equation}\label{double}
\overline{\CP\CPd}(\phi)=(\CP\CPd\olphi)^*\circ(\CP\CPd\sy_X)_*
=\overline{\CP}((\CPd\olphi)^*\circ(\CPd\sy_X)_*)
=\overline{\CP}\,\overline{\CP^\dagger}\phi
\end{equation}
and
\begin{equation}\label{codouble}
\overline{\CPd\CP}(\phi)=(\CPd\CP\olphi)^*\circ(\CPd\CP\sy_X)_*
=\overline{\CP^\dagger}((\CP\olphi)^*\circ(\CP\sy_X)_*)
=\overline{\CP^\dagger}\overline{\CP}\phi,
\end{equation}
for all $\CQ$-distributors $\phi:X\oto Y$. 

\begin{prop}\label{flat_presheaf}
The minimal lax extensions of the (co)presheaf functors $\CP,\,\CP^{\dagger}$ and the double presheaf functors $\CP\CP^{\dagger},\,\CP^{\dagger}\CP$ are flat and therefore provide the unique flat extensions to $\QDist$ of the corresponding monads on $\QCat$.
\end{prop}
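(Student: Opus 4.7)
The plan is to invoke Corollary \ref{uniqueness} together with Proposition \ref{uni_fla_ext}, so that it suffices to verify that each of the four minimal extensions $\ovl{\CP},\ovl{\CP^\dagger},\ovl{\CP\CP^\dagger},\ovl{\CP^\dagger\CP}$ is flat at identity distributors $1_X^*$; the promotion to lax extensions of the full monad structures will then follow from Theorem \ref{fla_imp_mon}, and the uniqueness from Corollary \ref{uniqueness}.

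For $\ovl{\CP}$, I would use the explicit formula (\ref{ext_presheaf}): since $\phi^\odot\tau=\tau\circ\phi$, specializing $\phi=1_X^*$ and recalling that $1_X^*$ acts as the identity distributor on any presheaf $\tau\in\CP X$, we get $(1_X^*)^\odot=1_{\CP X}$, hence
\[\ovl{\CP}\,1_X^*=((1_X^*)^\odot)^*=(1_{\CP X})^*=1_{\CP X}^*.\]
The symmetric computation using (\ref{ext_copresheaf}) gives $\ovl{\CP^\dagger}\,1_X^*=((1_X^*)^\oplus)_*=(1_{\CPd X})_*=1_{\CPd X}^*$.

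For the two composite monads, I would simply combine the previous step with the decompositions (\ref{double}) and (\ref{codouble}):
\[\ovl{\CP\CPd}\,1_X^*=\ovl{\CP}\,\ovl{\CPd}\,1_X^*=\ovl{\CP}\,1_{\CPd X}^*=1_{\CP\CPd X}^*,\]
and likewise $\ovl{\CPd\CP}\,1_X^*=\ovl{\CPd}\,\ovl{\CP}\,1_X^*=1_{\CPd\CP X}^*$. By Proposition \ref{hT_graph}, flatness at identity distributors is exactly what is needed, so each of the four minimal extensions is flat on $\QDist$.

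Finally, flatness implies (via Proposition \ref{uni_fla_ext}) that the relevant endofunctors preserve the full fidelity of all Yoneda $\CQ$-functors, whence Theorem \ref{fla_imp_mon} applies and delivers a flat lax extension of each of the four $2$-monads; Corollary \ref{uniqueness} then ensures this is the only flat extension. There is no real obstacle here: the only point requiring attention is the trivial but easily overlooked fact that $1_X^*$ acts as a two-sided neutral element on (co)presheaves, which is immediate from $\phi=b\circ\phi\circ a$ specialized to $\phi\in\CP X$ or $\phi\in\CPd X$.
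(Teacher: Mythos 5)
Your proposal is correct and follows essentially the same route as the paper: the paper likewise specializes the formulas $\ovl{\CP}\phi=(\phi^\odot)^*$ and $\ovl{\CPd}\phi=(\phi^\oplus)_*$ to $\phi=1_X^*$ to get flatness of $\ovl{\CP}$ and $\ovl{\CPd}$, and then deduces flatness of the two composites from the decompositions $\ovl{\CP\CPd}=\ovl{\CP}\,\ovl{\CPd}$ and $\ovl{\CPd\CP}=\ovl{\CPd}\,\ovl{\CP}$. The uniqueness and the promotion to monad extensions via Proposition \ref{uni_fla_ext}, Theorem \ref{fla_imp_mon} and Corollary \ref{uniqueness} are exactly as the paper intends by ``and therefore''.
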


\begin{proof}
From (\ref{ext_presheaf}) one obtains $\overline{\CP}1^*_X=1^*_{\CP X}$, so that $\ovl{\CP}$ is flat. Likewise, the flatness of $\CP^{\dagger}$ follows from (\ref{ext_copresheaf}), and $\CP\CP^{\dagger}$ and $\CP^{\dagger}\CP$ are flat since $\CP$ and $\CP^{\dagger}$ are, according to (\ref{double}) and (\ref{codouble}).
\end{proof}

\begin{exmp}\label{firstexmp}
We describe the coreflections into $\mathsf{ExtMnd}(\QSet)$ of the monads of Proposition \ref{flat_presheaf} when $\CQ=\mathsf{2}$ is the two-element chain. Both $o\CP d$ and $o\CPd d$ reduce to the usual powerset functor $P$  on $\Set$ and are provided with the usual monad structure. The lax extensions ${\bf o}\overline{\CP} {\bf d}$ and 
${\bf o}\overline{\CP}^\dagger {\bf d}$ give respectively the lax extension $\check{P}$ and $\hat{P}$, described in \cite{Hofmann2014} as
\[A(\check{P}r)B\iff\forall x\in A\exists y\in B\,(x\,r\,y)\quad\text{and}\quad A(\hat{P}r)B\iff \forall y\in B\exists x\in A\,(x\,r\,y),\]
for all relations $r:X\rto Y\,\text{and}\, A\subseteq X, B\subseteq Y$.

Both $o\CP\CPd d$ and $o\CPd\CP d$ reduce to the usual up-set functor 
\[U:\Set\to\Set, \quad UX=\{\frak{a}\subseteq PX\,|\,\forall A\in\frak{a}, A\subseteq B\Lra B\in\frak{a}\}.\] 
In fact, the monads $(\CP\CPd,\Fs,\Fy)$ and $(\CPd\CP,\Fsd,\Fyd)$ have the up-set monad $\mathbb{U}=(U,m,e)$ (see  \cite{Hofmann2014}) as their coreflection into $\QSet$, but
${\bf o}\overline{\CP\CPd}{\bf d}$ and ${\bf o}\overline{\CPd\CP}{\bf d}$ reduce to distinct lax extensions $\check{U}$ and $\hat{U}$ of $U$, given respectively by
\[\frak{a}(\check{U}r)\frak{b}\iff \forall A\in\frak{a}\,\exists B\in\frak{b}\,\forall y\in B\,\exists x\in A\,( x\ r\ y),\]
\[\frak{a}(\hat{U}r)\frak{b}\iff \forall B\in\frak{b}\,\exists A\in\frak{a}\,\forall x\in A\,\exists y\in B\,( x\ r\ y),\]
for all relations $r:X\rto Y$ and $\frak{a}\in UX, \frak{b}\in UY$.

The filter monad $\mathbb{F}$ and the ultrafilter monad are submonads of $\mathbb{U}$, and one may obtain the lax extensions $\hat{\mathbb{F}}$, $\check{\mathbb{F}}$, as well as the Barr extension of the ultrafilter monad, by initial extensions, as described in \cite{Hofmann2014}.
\end{exmp}

A presheaf $\si:(X,a)\oto \{|\si|\}$ (copresheaf $\tau:\{|\tau|\}\oto(X,a)$) is {\em conical} if there is some $A\subseteq X$ with $|x|=|\si|$ ($|x|=|\tau|$) for all $x\in A$, such that $\si=\bv_{x\in A}a(-,x)$ ($\tau=\bv_{x\in A}a(x,-)$, respectively). The conical presheaves (copresheaves) of a $\CQ$-category $X$ equipped with the structure inherited from $\CP X$ ($\CPd X$) form a $\CQ$-category, denoted by $\CH X$ ($\CH^\dagger X$, respectively). 

Clearly, $\CH X$ ($\CHd X$) is closed under the formation of joins in $\QDist(X,\{s\})\,(\QDist(\{s\},X))$ of families of (co)presheaves on $X$ of fixed type $s$:

\begin{lem} Let $X$ be a $\CQ$-category, and let
$\{\si_i:i\in I\}$ be a family of (co)presheaves in $\CH X$ ($\CHd X$), with all $\si_i$ having the same type. 
Then also  $\bv_{i\in I}\si_i$ lies in $\CH X$ ($\CHd X$, respectively).
\end{lem}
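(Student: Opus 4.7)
The plan is to unwind the definition of a conical presheaf and observe that the closure under joins is essentially a distributivity-of-suprema argument, together with the fact that a union of sets of fixed type is again a set of that fixed type.

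More precisely, for the presheaf case, I would begin by fixing the common type $s$ of the family $\{\sigma_i:i\in I\}$ and, for each $i$, choosing a witnessing subset $A_i\subseteq X$ with $|x|=s$ for all $x\in A_i$, such that
\[\sigma_i=\bv_{x\in A_i}a(-,x).\]
I would then set $A=\bigcup_{i\in I}A_i$, which is again a subset of $X$ with $|x|=s$ for every $x\in A$. Since all $\sigma_i$ and all the $a(-,x)$ (for $x\in A$) belong to the complete lattice $\QDist(X,\{s\})$, joins are computed pointwise and associativity/commutativity of suprema yields
\[\bv_{i\in I}\sigma_i=\bv_{i\in I}\bv_{x\in A_i}a(-,x)=\bv_{x\in A}a(-,x),\]
which exhibits $\bv_i\sigma_i$ as a conical presheaf of type $s$, hence as a member of $\CH X$.

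The copresheaf case is entirely dual: witness each $\sigma_i\in\CHd X$ by a set $A_i\subseteq X$ with $\sigma_i=\bv_{x\in A_i}a(x,-)$, take $A=\bigcup_i A_i$, and compute $\bv_i\sigma_i=\bv_{x\in A}a(x,-)$ in $\QDist(\{s\},X)$.

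There is no real obstacle here; the only point that requires a moment's attention is the well-definedness of the join, which is why the hypothesis that all $\sigma_i$ share a common type $s$ is needed: without it, the $\sigma_i$ would live in different hom-lattices of $\QDist$ and their supremum would not make sense. Once this is noted, the argument reduces to the trivial identity $\bv_i\bv_{x\in A_i}=\bv_{x\in\bigcup_i A_i}$ in the complete lattice $\QDist(X,\{s\})$ (respectively $\QDist(\{s\},X)$).
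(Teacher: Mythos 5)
Your proof is correct and is precisely the argument the paper leaves implicit (the lemma is prefaced by ``Clearly'' and given no proof): witness each $\si_i$ by a set $A_i$ of elements of type $s$, take the union, and use the identity $\bv_{i}\bv_{x\in A_i}=\bv_{x\in\bigcup_i A_i}$ in the complete lattice $\QDist(X,\{s\})$ (resp.\ $\QDist(\{s\},X)$). Your remark on why the common-type hypothesis is needed for the join to be well defined is also exactly the right observation.
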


For a $\CQ$-functor $f:X\to Y$ and every conical presheaf $\si$ of $X$,  $\si\circ f^*$ is a conical presheaf of $Y$. Hence, by restricting the domain and codomain of $(f^*)^\odot:\CP X\to\CP Y$ to $\CH X$ and $\CH Y$ respectively, one obtains a $\CQ$-functor, again denoted by $(f^*)^\odot:\CH X\to \CH Y$, and $\CH$ becomes a $2$-functor $\CH:\QCat\to\QCat$. In fact, $\CH$ belongs to a sub-$2$-monad of $(\CP,\sm,\se)$ on $\QCat$ with the unit given by the restrictions $\sy_X:X\to\CH X$ of the Yoneda functors and the multiplication by
\[\sfs_X:\CH\CH X\to\CH X,\quad \Sigma \mapsto\Sigma\circ(\sy_X)_*=\bv_{i\in I}\si_i,\] 
for all $\Sigma\in\CH\CH X$
 with $\Sigma=\bv_{i\in I}1^*_{\CH X}(-,\si_i)$. Following the work of \cite{Akhvlediani2010} when $\CQ$ is a quantale, the $2$-monad $(\CH,\sh,\sy)$ was called the {\em Hausdorff doctrine} in \cite{Stubbe2010} also for an arbitrary small quantaloid.

Dually, $X\mapsto\CHd X$ advances to a sub-$2$-monad of $(\CPd,\ssd,\syd)$, called the {\em co-Hausdorff doctrine}, which maps a $\CQ$-functor $f:X\to Y$ to $(f_*)^\oplus:\CHd X\to\CHd Y$, and is equipped with the unit given by the co-Yoneda functors
$\syd_X:X\to\CH^\dagger X$ and the multiplication given by
\[\sfs^\dagger_X:\CH^\dagger\CH^\dagger X\to\CH^\dagger X,\quad \Theta\mapsto(\syd_X)^*\circ\Theta=\bv_{i\in I}\tau_i,\] for all $\Theta\in\CH^\dagger\CH^\dagger X$ with $\Theta=\bv_{i\in I}1^*_{\CH^\dagger X}(\tau_i,-)$.
The minimal lax extension $\ovl{\CH}$ appears in \cite{Akhvlediani2010, Stubbe2010}:

\begin{prop} {\rm (1)} The Hausdorff doctrine $(\CH,\sh,\sy)$ on $\QCat$ admits the flat lax extension $\overline{\CH}$ to $\QDist$ given by
 \[\overline{\CH}\phi(\alpha,\beta)=\bw_{x\in A}\bv_{y\in B}\phi(x,y),\] 
 for all $\phi:(X,a)\oto(Y,b)$ and $\alpha=\bv_{x\in A}a(-,x)\in\CH X$,  $\beta=\bv_{y\in B}b(-,y)\in\CH Y$.

{\rm (2)} The co-Hausdorff doctrine $(\CHd,\ssd,\syd)$ on $\QCat$ admits the flat lax extension $\overline{\CH^\dagger}$ to $\QDist$ given by
\[\overline{\CH}^\dagger\phi(\alpha,\beta)=\bw_{y\in B}\bv_{x\in A}\phi(x,y),\] 
for all $\phi:(X,a)\oto(Y,b)$ and $\alpha=\bv_{x\in A}a(x,-)\in\CH^{\dagger}X,\, \beta=\bv_{y\in B}b(y,-)\in\CH^{\dagger}Y$.
 \end{prop}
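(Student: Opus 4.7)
The strategy is to apply Theorem~\ref{fla_imp_mon}, and to evaluate the resulting minimal extension $\overline{\CH}$ pointwise by reducing the calculation to the one for $\overline{\CP}$. First I would check the Yoneda full-fidelity hypothesis of Proposition~\ref{uni_fla_ext} for $\CH$: since $\overline{\CP}$ is flat (Proposition~\ref{flat_presheaf}), that proposition gives the full fidelity of $\CP\sy_X$ for every $\CQ$-category $X$. Because $\CH X$ and $\CH\CP X$ carry the hom-structures inherited from $\CP X$ and $\CP\CP X$, and $\CH\sy_X$ is simply the restriction of $\CP\sy_X$, the chain of equalities
\[1^*_{\CH X}(\alpha,\alpha')=1^*_{\CP X}(\alpha,\alpha')=1^*_{\CP\CP X}(\CP\sy_X\alpha,\CP\sy_X\alpha')=1^*_{\CH\CP X}(\CH\sy_X\alpha,\CH\sy_X\alpha')\]
transfers full fidelity to $\CH\sy_X$. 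Theorem~\ref{fla_imp_mon} then delivers $\overline{\CH}$ as the unique flat lax extension of the monad $(\CH,\sh,\sy)$ to $\QDist$.

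For the pointwise formula, I would unfold $\overline{\CH}\phi=(\CH\olphi)^*\circ(\CH\sy_X)_*$ at $(\alpha,\beta)\in\CH X\times\CH Y$ by means of the general quantaloid identity $(h^*\circ g_*)(z,z')=1^*_W(gz,hz')$, which bypasses the nominal supremum over $\CH\CP X$. The inheritance argument then gives
\[\overline{\CH}\phi(\alpha,\beta)=1^*_{\CH\CP X}(\CH\sy_X\alpha,\CH\olphi\beta)=1^*_{\CP\CP X}(\CP\sy_X\alpha,\CP\olphi\beta)=\overline{\CP}\phi(\alpha,\beta)=(\beta\circ\phi)\lda\alpha,\]
with the last equality coming from (\ref{ext_presheaf}). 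Writing $\alpha=\bv_{x\in A}\sy_X(x)$ and using both the Yoneda identity $\tau\lda\sy_X(x)=\tau(x)$ and the fact that $(-)\lda(-)$ turns joins in its second argument into meets, this reduces to $\overline{\CH}\phi(\alpha,\beta)=\bw_{x\in A}(\beta\circ\phi)(x)$. Finally, writing $\beta=\bv_{y\in B}b(-,y)$, swapping the two suprema, and using the distributor identity $b\circ\phi=\phi$ collapses $(\beta\circ\phi)(x)$ to $\bv_{y\in B}\phi(x,y)$, producing the claimed formula.

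Part~(2) is the mirror image: transfer Yoneda full fidelity from $\CPd\syd_X$ to $\CHd\syd_X$, apply Theorem~\ref{fla_imp_mon}, and use (\ref{ext_copresheaf}) $(\overline{\CPd}\phi=(\phi^\opl)_*)$ in place of (\ref{ext_presheaf}); the co-Yoneda identity and the dual conical expansion then close the calculation. The only delicate point in either half is the transfer step -- that every relevant piece of the computation done in $\CP$ (or $\CPd$) descends to the sub-$\CQ$-category of conical (co)presheaves -- and this is guaranteed by the closure lemma immediately preceding the proposition. After that, the entire proof is a straightforward manipulation of adjunctions and suprema in a quantaloid, so I expect no further obstacles.
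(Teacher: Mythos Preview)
The paper does not supply its own proof of this proposition; it simply states the result after the remark ``The minimal lax extension $\ovl{\CH}$ appears in \cite{Akhvlediani2010, Stubbe2010}'', deferring the argument to those references. Your proposal is therefore not competing with an in-paper proof but filling a gap the paper leaves to the literature.

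Your argument is correct and is exactly the one the paper's own machinery is designed to produce. The transfer of full fidelity from $\CP\sy_X$ to $\CH\sy_X$ is valid because, as the paper sets up just before the closure lemma, $\CH X$ carries the hom inherited from $\CP X$ and $\CH f$ is literally the restriction of $(f^*)^\odot$; hence $\CH\sy_X\alpha=\CP\sy_X\alpha$ and $1^*_{\CH\CP X}=1^*_{\CP\CP X}$ on the relevant elements. Your pointwise computation via the identity $(h^*\circ g_*)(z,z')=1^*_W(gz,hz')$ then reduces $\overline{\CH}\phi(\alpha,\beta)$ to $\overline{\CP}\phi(\alpha,\beta)=(\beta\circ\phi)\lda\alpha$, and the remaining unfolding using Yoneda and $b\circ\phi=\phi$ is routine. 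One small remark: the ``closure lemma immediately preceding the proposition'' is about closure under joins and is what underpins the monad multiplication $\sh$; the transfer step you actually use relies only on the inherited-hom and restricted-functor facts stated in the paragraph \emph{before} that lemma. This does not affect correctness.
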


\begin{exmp}
When the quantaloid is a unital quantale $\CV$, both $o\CH d$ and $o\CH^\dagger d$ give the ordinary powerset functor on $\Set$, with its usual monad structure, and ${\bf o}\overline{\CH}{\bf d}$ gives its Kleisli extension $\check{P}$ to $\VRel$, described in \cite{Hofmann2014} (Exercise IV.2.H) by
\[\check{P}r(A,B)=\bw_{x\in A}\bv_{y\in B}r(x,y),\] for all relations $r:X\rto Y$ and all $A\subseteq X,\,B\subseteq Y$, while 
${\bf o}\overline{\CH^\dagger}{\bf d}$ gives another lax extension $\hat{P}$, described by
\[\hat{P}r(A,B)=\bw_{y\in B}\bv_{x\in A}r(x,y).\] 
Briefly: $({\mathbb P},\check{P})$ is the coreflection of $\CH_\CV=\CH$ into $\mathsf{ExtMnd}(\Set)$, and likewise in the dual case. Conversely, embedding $({\mathbb P},\check{P})$ into 
$\mathsf{ffYMnd}(\CV\text{-}\Cat)$ via Theorem \ref{coref} gives us back the monad $\CH_\CV$ as introduced in \cite{Akhvlediani2010}, and likewise in the dual case.
\end{exmp}



 
In order to establish the existence of sub-2-monads $\CH\CHd,\CHd\CH$ of $\CP\CPd, \CPd\CP$, respectively, we assume that the quantaloid $\CQ$ be {\em completely distributive}, that is: all hom lattices $\CQ(p,q)$ of $\CQ$ need to be completely distributive. This assumption will enable us to show the needed closure property for conical presheaves, as follows.

\begin{lem}\label{meet}
 Let $\CQ$ be completely distributive and $X$ be a $\CQ$-category. Then,
 for every family $\{\Sigma_i:i\in I\}$ in $\CH\CHd X$ with all $\Sigma_i$ having the same type, $\bw_{i\in I}\Sigma_i$ lies also in $\CH\CHd X$.
\end{lem}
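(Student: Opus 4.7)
The plan is to compute $\bw_{i\in I}\Si_i$ pointwise, use complete distributivity to swap the outer meet with the inner joins hidden in the conical expression of each $\Sigma_i$, and then recognize the result as a conical join of elements of $\CH^\dagger X$.

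More precisely, write $\Si_i=\bv_{\tau\in A_i}1^*_{\CHd X}(-,\tau)$ with $A_i\subseteq\CHd X$ and all $\tau\in A_i$ of common type $s=|\Si_i|$. Recalling that $1^*_{\CHd X}(\tau_0,\tau)=\tau\rda\tau_0$, evaluation at $\tau_0\in\CHd X$ gives
\[
\Bigl(\bw_{i\in I}\Si_i\Bigr)(\tau_0)
 \;=\; \bw_{i\in I}\bv_{\tau\in A_i}(\tau\rda\tau_0).
\]
Since every hom lattice of $\CQ$ is completely distributive, Raney's interchange law yields
\[
\bw_{i\in I}\bv_{\tau\in A_i}(\tau\rda\tau_0)
 \;=\;\bv_{f\in\prod_i A_i}\bw_{i\in I}\bigl(f(i)\rda\tau_0\bigr).
\]
Then I would use the fact that $(-)\rda\tau_0$ turns joins in its first argument into meets (the defining adjunction $\be\circ(-)\dv\be\rda(-)$ in $\QRel$, applied componentwise to copresheaves of type $s$) to obtain
\[
\bw_{i\in I}\bigl(f(i)\rda\tau_0\bigr)
 \;=\;\Bigl(\bv_{i\in I}f(i)\Bigr)\rda\tau_0
 \;=\;1^*_{\CHd X}\Bigl(\tau_0,\,\bv_{i\in I}f(i)\Bigr).
\]
By the preceding lemma (closure of $\CHd X$ under joins of families of copresheaves of a common type), each $\tau_f:=\bv_{i\in I}f(i)$ lies in $\CHd X$ and has type~$s$, so setting $\widetilde A=\{\tau_f:f\in\prod_i A_i\}\subseteq\CHd X$ we get
\[
\bw_{i\in I}\Si_i \;=\; \bv_{\tau\in\widetilde A}1^*_{\CHd X}(-,\tau),
\]
which exhibits $\bw_{i\in I}\Si_i$ as a conical presheaf on $\CHd X$, hence as an element of $\CH\CHd X$.

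The main obstacle is the interchange of the meet over $I$ with the joins appearing in each $\Si_i$: this is precisely what forces the completely distributive hypothesis on the hom lattices of $\CQ$, and is the only non-formal step. Everything else—the pointwise computation, the reduction of a meet of internal homs in their first argument to the hom at the join, and the invocation of the previous lemma for $\CHd X$—is a routine rearrangement.
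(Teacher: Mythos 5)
Your proof is correct and follows essentially the same route as the paper's: decompose each $\Sigma_i$ as a conical join over $\CHd X$, use complete distributivity of the hom lattices to interchange the outer meet with these joins over the product index set, rewrite $\bw_{i}\bigl(f(i)\rda\tau_0\bigr)$ as $\bigl(\bv_i f(i)\bigr)\rda\tau_0$, and invoke the closure of $\CHd X$ under same-type joins to conclude conicality. The only cosmetic difference is that you work pointwise at $\tau_0$ while the paper manipulates the presheaves directly; the content is identical.
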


\begin{proof} 
Let $|\Sigma_i|=p$ for all $i\in I$. For every $i\in I$, there is a family $(\tau_j:j\in J_i)$ in $\CHd X$ with $|\tau_j|=p$ for all $j\in J_i$, such that $\Sigma_i=\bv_{j\in J_i}1^*_{\CHd X}(-,\tau_j)$. 
Putting $J=\prod\limits_{i\in I}J_i$, with the complete distributivity we obtain
\[\bw_{i\in I}\Sigma_i=\bw_{i\in I}\bv_{j\in J_i}1^*_{\CHd X}(-,\tau_j)
=\bv_{s\in J}\bw_{i\in I}1^*_{\CHd X}(-,\tau_{s(i)})=\bv_{s\in J}1^*_{\CHd X}(-,\bv_{i\in I}\tau_{s(i)}).\] 
Consequently, since each $\bv_{i\in I}\tau_{s(i)}$ is a conical copresheaf of $X$, whence lying in $\CHd X$, $\bw_{i\in I}\Sigma_i$ has been presented as a conical presheaf of $\CHd X$.
\end{proof}
The following Lemma relies on the previous one and will enable us to establish the monad multiplication for $\CH\CHd$.

\begin{lem}\label{monadmult} 
Let $\CQ$ be completely distributive and $X$ be a $\CQ$-category. Then $\Upsilon\circ(\syd_{\CH\CHd X}\cdot\sy_{\CHd X})_*$ is a conical presheaf of $\CHd X$ whenever $\Upsilon$ is a conical presheaf of $\CHd\CH\CHd X$.
\end{lem}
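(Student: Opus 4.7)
The plan is to unwrap the conicality of $\Upsilon$ through its two layers ($\CH$ then $\CHd$), compute the distributor $\Upsilon\circ(\syd_{\CH\CHd X}\cdot\sy_{\CHd X})_*$ pointwise in $\si\in\CHd X$, apply (dual) Yoneda twice to peel off the two outer layers, and finally invoke Lemma \ref{meet} together with the closure of $\CH\CHd X$ under same-type joins. To lighten notation, abbreviate $Y:=\CH\CHd X$ and $f:=\syd_Y\cdot\sy_{\CHd X}:\CHd X\to\CHd Y$. Then $\Upsilon=\bv_{i\in I}1^*_{\CHd Y}(-,\Theta_i)$ with each $\Theta_i\in\CHd Y$ itself conical, $\Theta_i=\bv_{j\in J_i}1^*_Y(\Sigma_{i,j},-)$, where the elements $\Sigma_{i,j}\in Y$ all share the common type $|\Upsilon|$.

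Substituting these into the pointwise formula for $\Upsilon\circ f_*$ and using $f_*=1^*_{\CHd Y}\circ f_\circ$ together with the absorbing law $1^*_{\CHd Y}\circ 1^*_{\CHd Y}=1^*_{\CHd Y}$ reduces $(\Upsilon\circ f_*)(\si)$ to $\bv_i(\Theta_i\rda f\si)$. Distributivity of $\rda$ over joins in its first slot then gives $\Theta_i\rda f\si=\bw_j(\syd_Y(\Sigma_{i,j})\rda f\si)$, and two Yoneda reductions finish the simplification: first, the dual identity $\syd_Y(y)\rda\tau=\tau(y)$ yields $\syd_Y(\Sigma_{i,j})\rda f\si=f\si(\Sigma_{i,j})$; second, applying the identity $\tau\lda\sy_Z(z)=\tau(z)$ to $f\si=\syd_Y(\sy_{\CHd X}(\si))$ with the presheaf $\Sigma_{i,j}$ of $\CHd X$ yields $f\si(\Sigma_{i,j})=\Sigma_{i,j}(\si)$. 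The running formula becomes
\[(\Upsilon\circ f_*)(\si)=\bv_i\bw_j\Sigma_{i,j}(\si).\]

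At this stage the complete distributivity of $\CQ$ enters through Lemma \ref{meet}: for each $i$, $\widetilde\Sigma_i:=\bw_j\Sigma_{i,j}$ lies in $\CH\CHd X$. Since meets and joins in $\CP\CHd X$ are computed pointwise, $\widetilde\Sigma_i(\si)=\bw_j\Sigma_{i,j}(\si)$, and since the $\widetilde\Sigma_i$ share the common type $|\Upsilon|$, the closure of $\CH\CHd X$ under same-type joins gives $\bv_i\widetilde\Sigma_i\in\CH\CHd X$. This identifies $\Upsilon\circ f_*$ with $\bv_i\widetilde\Sigma_i$, a conical presheaf of $\CHd X$. The main obstacle I anticipate is the bookkeeping of types over $\CQ_0$ at the two Yoneda steps, where one must keep straight which of $\lda$ and $\rda$ applies and on which side; the rest is mechanical use of absorption and distributivity.
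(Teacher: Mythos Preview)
Your proposal is correct and follows essentially the same route as the paper's proof: unwind the two conical layers of $\Upsilon$, compute $\Upsilon\circ f_*$ pointwise, reduce via (co-)Yoneda twice to obtain $\bigvee_i\bigwedge_j\Sigma_{i,j}$, and finish with Lemma~\ref{meet} together with closure of $\CH\CHd X$ under same-type joins. The only difference is cosmetic: the paper carries out the same reductions in the hom-notation $1^*_{\CHd\CH\CHd X}(-,-)$ rather than the $\rda$/$\lda$ form you use, and it leaves the two Yoneda steps implicit where you spell them out.
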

\begin{proof} Let $\Upsilon=\bv_{i\in I}1^*_{\CHd\CH\CHd X}(-,\Xi_i)$, with $\Xi_i=\bv_{j\in J_i}1^*_{\CH\CHd X}(\Sigma_j,-)$ for all $i\in I$. Then
\begin{align*}\Upsilon\circ(\syd_{\CH\CHd X}\cdot\sy_{\CHd X})_*
&=\bv_{i\in I}1^*_{\CHd\CH\CHd X}(\syd_{\CH\CHd X}\cdot\sy_{\CHd X}(-),\Xi_i)\\
&=\bv_{i\in I}1^*_{\CHd\CH\CHd X}(\syd_{\CH\CHd X}\cdot\sy_{\CHd X}(-),\bv_{j\in J_i}1^*_{\CH\CHd X}(\Sigma_j,-))\\
&=\bv_{i\in I}\bw_{j\in J_i}1^*_{\CHd\CH\CHd X}(\syd_{\CH\CHd X}\cdot\sy_{\CHd X}(-),1^*_{\CH\CHd X}(\Sigma_j,-))\\
&=\bv_{i\in I}\bw_{j\in J_i}1^*_{\CH\CHd X}(\sy_{\CHd X}(-),\Sigma_j)\\
&=\bv_{i\in I}\bw_{j\in J_i}\Sigma_j.
\end{align*}
Since. by Lemma \ref{meet} , $\bw_{j\in J_i}\Sigma_j$ is conical for all $i\in I$,
we conclude that $\Upsilon\circ(\syd_{\CH\CHd X}\cdot\sy_{\CHd X})_*$ is conical as well.
\end{proof}

The Lemma gives the crucial tool for establishing the {\em double Hausdorff monad} and the {\em double co-Hausdorff monad} on $\QCat$ and describing their minimal lax extensions to $\QDist$:

\begin{thm}\label{doubleHaus}
When $\CQ$ is completely distributive,
$\CH\CHd$ belongs to a sub-$2$-monad $(\CH\CHd,\Fs,\Fy)$ of $(\CP\CPd,\Fs,\Fy)$ and admits the flat lax extension $\overline{\CH\CHd}$ to $\QDist$ with $\overline{\CH\CHd}=\overline{\CH}\,\overline{\CH^\dagger}$.
Likewise,  there is a $2$-monad $(\CHd\CH,\Fs^\dagger,\Fyd)$ on $\QCat$ admitting the flat lax extension $\overline{\CHd\CH}$ to $\QDist$ with $\overline{\CHd\CH}=\overline{\CH^\dagger}\overline{\CH}$.
\end{thm}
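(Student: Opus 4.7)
The plan is to realize $(\CH\CHd, \Fs, \Fy)$ as a restriction of the double presheaf $2$-monad $(\CP\CPd, \Fs, \Fy)$, then to identify its minimal lax extension with the composite $\ovl{\CH}\,\ovl{\CHd}$---flat since its two factors are---and finally to invoke Theorem~\ref{fla_imp_mon} to promote this automatically to a lax extension of the monad, with uniqueness ensured by Corollary~\ref{uniqueness}.

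For the sub-$2$-monad structure, since $\CH$ and $\CHd$ are already sub-$2$-functors of $\CP$ and $\CPd$, the composite $\CH\CHd$ is a sub-$2$-functor of $\CP\CPd$. One must check that the unit $\Fy_X=\sy_{\CPd X}\cdot\syd_X$ and multiplication $\Fs_X$ restrict. For the unit, $\syd_X(x)=a(x,-)$ is a singleton-conical copresheaf and so lies in $\CHd X$, and then $\sy_{\CPd X}(\syd_X(x))=1^*_{\CPd X}(-,a(x,-))$ is a singleton-conical presheaf of $\CHd X$, placing $\Fy_X(x)$ in $\CH\CHd X$. For the multiplication, Lemma~\ref{monadmult} says exactly that $\Fs_X(\Upsilon)=\Upsilon\circ(\syd_{\CH\CHd X}\cdot\sy_{\CHd X})_*$ lies in $\CH\CHd X$ whenever $\Upsilon\in\CH\CHd\CH\CHd X$; the monad identities are then inherited from $(\CP\CPd,\Fs,\Fy)$.

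Next, by Proposition~\ref{Barr ext} the minimal lax extension of $\CH\CHd$ is $\ovl{\CH\CHd}\phi=(\CH\CHd\olphi)^*\circ(\CH\CHd\sy_X)_*$. Applying the whiskering identities of Proposition~\ref{hT_graph} to the flat lax extension $\ovl{\CH}$ of $\CH$, the outer $\CH$-images may be pulled out to give $\ovl{\CH}((\CHd\olphi)^*\circ(\CHd\sy_X)_*)=\ovl{\CH}\,\ovl{\CHd}\phi$, in parallel with equation~(\ref{double}). Flatness is then immediate from the flatness of $\ovl{\CH}$ and $\ovl{\CHd}$: $\ovl{\CH}\,\ovl{\CHd}1^*_X=\ovl{\CH}1^*_{\CHd X}=1^*_{\CH\CHd X}$. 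By Proposition~\ref{uni_fla_ext}, $\CH\CHd\sy_X$ is therefore fully faithful for every $\CQ$-category $X$, so Theorem~\ref{fla_imp_mon} upgrades $\ovl{\CH\CHd}$ to a flat lax extension of the $2$-monad $(\CH\CHd,\Fs,\Fy)$, and Corollary~\ref{uniqueness} confirms its uniqueness. The assertion for $(\CHd\CH,\Fsd,\Fyd)$ follows by the entirely dual argument.

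The principal obstacle lies in the restriction of the monad multiplication, handled by Lemma~\ref{monadmult}, which itself depends on Lemma~\ref{meet}; complete distributivity of $\CQ$ is invoked precisely so that arbitrary meets of conical presheaves of common type remain conical. Without it, the meet $\bw_{j\in J_i}\Sigma_j$ appearing in the computation of $\Fs_X(\Upsilon)$ could fail to be conical, and the multiplication of the double presheaf monad would not restrict to $\CH\CHd$.
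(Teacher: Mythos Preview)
Your proof is correct and follows essentially the same route as the paper: restrict the unit trivially, invoke Lemma~\ref{monadmult} for the multiplication, and identify $\ovl{\CH\CHd}$ with $\ovl{\CH}\,\ovl{\CHd}$ via whiskering as in (\ref{double}) to obtain flatness. You are slightly more explicit than the paper in appealing to Theorem~\ref{fla_imp_mon} and Corollary~\ref{uniqueness} to promote the flat extension of the functor to one of the monad, and your closing paragraph on the role of complete distributivity is a welcome clarification; one minor notational slip is that the restricted unit should read $\sy_{\CHd X}\cdot\syd_X$ (with $1^*_{\CHd X}(-,a(x,-))$) rather than $\sy_{\CPd X}\cdot\syd_X$, but this does not affect the argument.
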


\begin{proof}
Trivially, the units $\Fy_X$ of $(\CP\CPd,\Fs,\Fy)$ take values in $\CH\CHd X$ and may therefore serve as the units for the desired sub-$2$-monad:
\[\Fy_X=\sy_{\CHd X}\cdot\syd_X:X\to\CH\CHd X.\]
That the monad multiplications $\Fs_X$ may be restricted as well to yield $\CQ$-functors
\[\Fs_X=(\syd_{\CH\CHd X}\cdot\sy_{\CHd X})^\odot_*:\CH\CHd\CH\CHd X\to\CH\CHd X\]
is less obvious but follows immediately with Lemma \ref{monadmult}.
It is now clear that $(\CH\CHd,\frak{h},\Fy)$ is a sub-$2$-monad of $(\CP\CPd,\Fs,\Fy)$. That the minimal lax extension $\ovl{\CH\CHd}$ is the composite of the minimal lax extensions of $\CH$ and $\CHd$ follows as in (\ref{double}), which also shows that $\ovl{\CH\CHd}$ is flat.

Dually, one establishes the sub-$2$-monad $(\CHd\CH,\Fs^\dagger,\Fyd)$ of $(\CPd\CP,\Fsd,\Fyd)$, with its unit and multiplication given by 
\[\Fyd_X=\syd_{\CH X}\cdot\sy_X:X\to\CHd\CH X,\]
\[\Fs^\dagger_X=((\sy_{\CHd\CH X}\cdot\syd_{\CH X})^*)^\oplus:\CHd\CH\CHd\CH X\to\CHd\CH X.\]
The assertion about its minimal lax extension follows as in (\ref{codouble}).
\end{proof}


\begin{exmp} When $\CQ$ is a unital and completely distributive quantale $\CV$, both $o\CHd\CH d$ and $o\CH\CHd d$ give the up-set functor $U:\Set\to\Set$ (see Example \ref{firstexmp}); in fact,
both the double Hausdorff and double co-Hausdorff monad induce the up-set monad $\mathbb{U}=(U,m,e)$ on $\Set$. (Note that, of course,  $\CH=\CH_{\CV}$ and $\CHd=\CHd_{\CV}$ depend on $\CV$, but we continue to omit the annotation.) The lax extension 
${\bf o}\overline{\CHd\CH}{\bf d}=\check{U}$ is the Kleisli extension to $\VRel$ of the up-set functor that has been mentioned in \cite{Hofmann2014} as Exercise IV.2.H. Explicitly, for a $\CV$-relation $r:X\rto Y$,
\[\check{U} r(\CA,\CB)=\bw_{B\in\CB}\bv_{A\in\CA}\bw_{x\in A}\bv_{y\in B}r(x,y)\] for all $\CA\in UX, \CB\in UY$.
The lax extension $\CU\overline{\CH\CHd}\CD=\hat{P}$ gives another lax extension of $P$ to $\VRel$, with
\[\hat{U} r(\CA,\CB)=\bw_{A\in\CA}\bv_{B\in\CB}\bw_{y\in B}\bv_{x\in A}r(x,y)\] for all $\CV$-relations $r:X\rto Y$ and $\CA\in UX, \CB\in UY$.

Finally, the filter monad $\mathbb{F}$ and the ultrafilter monad on $\Set$ being submonads of $\mathbb{U}$, one may obtain the lax extensions $\hat{F}$, $\check{F}$ to $\CV$-${\bf Rel}$, as well the Barr extension of the ultrafilter monad as initial extensions of $\hat{U},\check{U}$. The lax algebras pertaining to the Barr extension to $\CV$-$\bf{Rel}$ of the ultrafilter monad have recently been presented in various forms in \cite{Lai2016}.
\end{exmp}

$$ $$
{\em Acknowledgements.}
The first author acknowledges the support of National Natural Science Foundation of China (11101297). He is also grateful to Walter Tholen for warm hospitality during his visit to the Department of Mathematics and Statistics at York University when this paper was completed.

The second author acknowledges the support of the Natural Sciences and Engineering Research Council of Canada (Discovery Grant 501260).

$$   $$




\end{document}